\newtheorem{theorem}{Theorem}
\newtheorem{lemma}{Lemma}
\newtheorem{corollary}{Corollary}
\newtheorem{remark}{Remark}
\newtheorem{observation}{Observation}
\begin{document}
\title{\Large\bf Upper bound for the rainbow connection number of bridgeless
graphs with diameter 3\footnote{Supported by NSFC
No.11071130.}}
\author{\small Hengzhe Li, Xueliang Li, Yuefang Sun
\\
\small Center for Combinatorics and LPMC-TJKLC
\\
\small Nankai University, Tianjin 300071, China
\\
\small lhz2010@mail.nankai.edu.cn; lxl@nankai.edu.cn\\
\small syf@cfc.nankai.edu.cn}
\date{}
\maketitle
\begin{abstract}
A path in an edge-colored graph $G$, where adjacent edges may have
the same color, is called rainbow if no two edges of the path are
colored the same. The rainbow connection number $rc(G)$ of $G$ is
the smallest integer $k$ for which there exists a $k$-edge-coloring
of $G$ such that every pair of distinct vertices of $G$ is connected
by a rainbow path. It is known that for every integer $k\geq 2$
deciding if a graph $G$ has $rc(G)\leq k$ is NP-Hard, and a graph
$G$ with $rc(G)\leq k$ has diameter $diam(G)\leq k$. In foregoing
papers, we showed that a bridgeless graph with diameter $2$ has
rainbow connection number at most $5$. In this paper, we prove that
a bridgeless graph with diameter $3$ has rainbow connection number
at most $9$. We also prove that for any bridgeless graph $G$ with
radius $r$, if every edge of $G$ is contained in a triangle, then
$rc(G)\leq 3r$. As an application, we get that for any graph $G$
with minimum degree at least $3$, $rc(L(G))\leq 3 rad(L(G))\leq 3
(rad(G)+1)$.

{\flushleft\bf Keywords}: Edge-coloring, Rainbow path, Rainbow
connection number, Diameter\\[2mm]
{\bf AMS subject classification 2010:} 05C15, 05C40
\end{abstract}

\section{Introduction}

All graphs in this paper are undirected, finite and simple. We refer
to book \cite{bondy} for notation and terminology not described
here. A path $u=u_1,u_2,\ldots,u_k=v$ is called a $P_{u,v}$ path.
The length of a path is its number of edges. The distance between
two vertices $x$ and $y$ in $G$, denoted by $d(x,y)$, is the length
of a shortest path between them. The $eccentricity$ of a vertex $x$
is $ecc(x)=max_{y\in V(G)}d(x,y)$. The $radius$ and $diameter$ of
$G$ are $rad(G)=min_{x\in V(G)}ecc(x)$ and $diam(G)=max_{x\in
V(G)}ecc(x)$, respectively. A vertex $u$ is a $center$ if
$ecc(u)=rad(G)$.

A path in an edge-colored graph $G$, where adjacent edges may have
the same color, is called $rainbow$ if no two edges of the path are
colored the same. An edge-coloring of graph $G$ is a $rainbow\
edge$-$coloring$ if every pair of distinct vertices of graph $G$ is
connected by a rainbow path. The $rainbow\ connection\ number\
rc(G)$ of $G$ is the minimum integer $k$ for which there exists a
$k$-edge-coloring of $G$ such that every pair of distinct vertices
of $G$ is connected by a rainbow path. It is easy to see that
$diam(G)\leq rc(G)$ for any connected graph $G$.

The rainbow connection number was introduced by Chartrand et al. in
\cite{char}. It is of great use in transferring information of high
security in multicomputer networks. We refer the readers to
\cite{chak,char2} for details.

Chakraborty et al. \cite{chak} investigated the hardness and
algorithms for the rainbow connection number, and showed that given
a graph $G$, deciding if $rc(G)=2$ is NP-Complete. Recently, Ananth
and Nasre \cite{AN} showed that for every integer $k\geq 2$ deciding
if a graph has $rc(G)\leq k$ is NP-Hard. In particular, computing
$rc(G)$ is NP-Hard. Bounds for the rainbow connection number of a
graph have also been studies in terms of other graph parameters, for
example, radius, diameter, dominating number, minimum degree,
connectivity, etc.
\cite{bas,chan,char,char2,kri,lili,lili2,sch,sch2}. A survey on the
rainbow connection number is given by Li and Sun in \cite{lis}.

A graph $G$ is called chordal if $G$ has no induced cycle of length
greater $3$. Chandran, Das, Rajendraprasad and Varma in \cite{chan}
got the following bound for the rainbow connection number of a graph
$G$ in terms of its radius.
\begin{theorem}{\upshape\cite{chan}}
Let $G$ be a bridgeless chordal graph. Then $rc(G)\leq 3rad(G)$.
\end{theorem}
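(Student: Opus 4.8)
The plan is to reduce the chordal hypothesis to a purely local one and then exploit a breadth-first layering from a center. First I would record the structural observation that in a bridgeless chordal graph every edge lies in a triangle: an edge $e=xy$ is not a bridge, so it lies on some cycle, and taking a shortest cycle through $e$ and using that a chord of a cycle of length $\geq 4$ would yield a shorter cycle through $e$ (chordality), the shortest such cycle must be a triangle. Thus it suffices, and is cleaner, to prove the bound under the weaker assumption that $G$ is bridgeless and every edge lies in a triangle, which is exactly the second main theorem advertised in the abstract; so the chordal statement becomes a special case. Fix a center $u$ with $ecc(u)=rad(G)=r$ and let $V_0=\{u\},V_1,\dots,V_r$ be the distance layers from $u$. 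Since the target bound is $3r$, the natural idea is to reserve a palette $C_\ell=\{a_\ell,b_\ell,c_\ell\}$ of three colors for each layer $\ell\in\{1,\dots,r\}$, using $3r$ colors in all.

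Next I would build, for every vertex $v\in V_\ell$, a canonical rainbow up-walk $W(v)$ from $v$ to $u$ that spends exactly one edge between consecutive layers and colors the step from layer $\ell$ to layer $\ell-1$ with a color drawn from $C_\ell$. With such walks in hand, two vertices $x\in V_i$ and $y\in V_j$ (say $i\le j$) are joined by following $W(x)$ and $W(y)$ until they first meet; because $W(x)$ uses colors only from $C_1\cup\cdots\cup C_i$ and $W(y)$ only from $C_1\cup\cdots\cup C_j$, the sole danger of a repeated color lies in the layers they share. Here the three colors per layer are meant to provide slack: one color is used to go straight up along a fixed spanning down-tree, while the triangle through each tree edge supplies an alternative, differently colored step that lets one of the two branches sidestep the other, so that on every shared layer the $x$-branch and the $y$-branch use distinct colors. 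The bridgeless/triangle hypothesis is precisely what guarantees the existence of this alternative step at every stage.

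The main obstacle, and the crux of the whole argument, is controlling these color collisions on the shared lower layers with only three colors per layer while the layers themselves may induce subgraphs of arbitrarily large degree: a naive breadth-first tree would need as many colors as the maximum number of sibling branches in order to keep them separable. I expect chordality to be exactly the tool that tames this, since the maximal-clique (clique-tree) structure of a chordal graph forces the up-edges and the horizontal triangle edges at each layer to organize into cliques, so that a single ``vertical'' color per layer suffices for ascending and the two remaining colors handle the at-most-two-way merging of any pair of branches. Concretely I would try to make this rigorous by induction on $r$: the base case $r=1$ is a graph with a universal vertex in which every edge lies in a triangle, which one checks directly has $rc\le 3$, and in the inductive step I would peel off the outermost layer $V_r$, apply the bound to the bridgeless chordal graph induced on $V_0\cup\cdots\cup V_{r-1}$, and attach $V_r$ using the three fresh colors of $C_r$. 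The delicate point to verify is that this peeling keeps the graph bridgeless and chordal and does not increase the radius, and that each vertex of $V_r$ can indeed reach $V_{r-1}$ by a triangle-assisted step compatible with the colors already chosen; I anticipate a case analysis here according to whether the third vertex of the guaranteeing triangle lies in $V_{r-1}$ or remains in $V_r$, and this is where most of the real work will go.
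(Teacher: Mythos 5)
Your opening reduction is exactly the paper's: in a bridgeless chordal graph a shortest cycle through any edge must be a triangle, so the chordal statement follows from the more general claim that a bridgeless graph in which every edge lies in a triangle has $rc(G)\leq 3\,rad(G)$ (the paper's Theorem~2). Your layering skeleton also matches: fix a center $x$, take the distance layers $N_1(x),\dots,N_r(x)$, and spend three fresh colors per layer. But the crux --- why three colors per layer suffice --- is precisely what you leave unresolved, and both devices you propose to resolve it are wrong turns. Invoking chordality and clique trees at this point undoes your own reduction (the general triangle-condition theorem, which you said you would prove, makes no chordality assumption), and it is also unnecessary. The induction that peels off the outer layer $V_r$ has the defect you yourself flag but do not repair: an edge of $G[N_{r-1}[x]]$ is guaranteed a triangle only in $G$, and its third vertex may lie in $V_r$, so the inner ball need not satisfy the triangle condition and need not even be bridgeless. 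The paper never inducts this way; it colors outward layer by layer, always using the triangle property of $G$ itself.

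The missing engine is the spanning-forest/bipartition trick, and with it your worry about ``arbitrarily many sibling branches'' evaporates because rainbow connectivity is a pairwise condition: only \emph{two} descending branches ever need to be color-separated at a given layer. Concretely, the triangle condition forces every vertex of $N_i(x)$ either to be non-isolated in $G[N_i(x)]$ or to have at least two neighbors in $N_{i-1}(x)$ (the triangle through its up-edge must close inside $N_{i-1}(x)$ if it cannot close inside $N_i(x)$). Take a spanning forest without isolated vertices of the non-isolated part of $G[N_i(x)]$, bipartition it into $X_i\cup Y_i$, and color all up-edges from $X_i$ with $a_i$, all up-edges from $Y_i$ with $b_i$, and the horizontal edges $E[X_i,Y_i]$ with $c_i$ (isolated vertices get one up-edge colored $a_i$ and the rest $b_i$). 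Then every vertex of layer $i$ has two routes into layer $i-1$ with disjoint color sets, namely $\{a_i\}$ versus $\{c_i,b_i\}$ or $\{b_i\}$ versus $\{c_i,a_i\}$, so any one pair of branches can always be disentangled on every shared layer, and the concatenated walk down to $x$ and back up is rainbow. Without this (or an equivalent) mechanism your argument does not close; with it, neither chordality nor the peeling induction is needed.
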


Let $G$ be a bridgeless chordal graph and $e$ be any edge of $G$.
Since $G$ is bridgeless, $e$ must be contained in some cycles of
$G$. Let $C$ be a smallest cycle containing $e$. Then $C$ has to be
a triangle; otherwise, $G$ admits an induced cycle with length at
least $4$, which contradicts to the fact that $G$ is a chordal
graph. So every edge of a bridgeless chordal graph is contained in a
triangle. Thus, our following theorem is a generalization of
Theorem~1.
\begin{theorem}
Let $G$ be a bridgeless graph. If every edge of $G$ is contained in
a triangle, then $rc(G)\leq 3rad(G)$.
\end{theorem}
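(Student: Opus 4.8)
The plan is to exploit a BFS layering from a central vertex and to spend exactly three colors per layer, routing every pair of vertices through the layered structure. Fix a center $u$, so that $ecc(u)=rad(G)=r$, and let $V_i=\{v\in V(G):d(u,v)=i\}$ for $0\le i\le r$. Reserve for each level $i$ a palette $C_i=\{3i-2,3i-1,3i\}$ of three fresh colors, and assign to every edge a level: an edge joining $V_{i-1}$ to $V_i$, or joining two vertices of $V_i$, is to receive a color from $C_i$ (equivalently, the level of an edge is the larger of the distances of its endpoints from $u$). Fix a BFS tree $T$ rooted at $u$; for $v\in V_i$ write $p(v)\in V_{i-1}$ for its parent and color the tree edge $vp(v)$ with $3i-2$. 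Then the tree path from any $v\in V_i$ to $u$ uses the colors $3i-2,3(i-1)-2,\ldots,1$, one per level, so it is rainbow. This already settles two easy situations: every vertex reaches $u$ by a rainbow path, and whenever one of $v,w$ is an ancestor of the other in $T$ the tree $v$--$w$ path is rainbow as well.

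The remaining, and genuinely hard, case is a pair $v\in V_i$, $w\in V_j$ whose tree paths split at $z=\mathrm{lca}(v,w)\in V_k$. On the two branches the colors $3m-2$ for $k<m\le\min(i,j)$ occur on both sides, so the naive tree path $v\to z\to w$ is not rainbow. Here I intend to use the hypothesis that every edge lies in a triangle. For a tree edge $ap(a)$ with $a\in V_m$, fix a triangle containing it and let $x$ be its third vertex; since $x$ is adjacent to $a\in V_m$ and to $p(a)\in V_{m-1}$, necessarily $x\in V_{m-1}\cup V_m$. If $x\in V_m$ then $ax$ and $xp(a)$ both lie at level $m$, so they can be colored with the two remaining colors $3m-1,3m$ of $C_m$, and $a\to x\to p(a)$ becomes a rainbow two-edge detour replacing $ap(a)$; if $x\in V_{m-1}$ then $a$ acquires a second parent, giving a redundant up-edge that may be colored $3m-1$. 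In either case level $m$ offers $a$ an alternative, differently colored way of descending one level. The idea is to leave the $v$-branch on its tree edges (colors $3m-2$) and to reroute the $w$-branch, on the overlapping levels $k<m\le\min(i,j)$, through these detours; the detours use only colors distinct from $3m-2$, so the two branches no longer collide and the concatenated walk $v\to z\to w$ is rainbow. One then checks that the $v$- and $w$-branches meet only at $z$ and that the detour vertices can be kept off the opposite branch, so the walk is in fact a path.

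The main obstacle is to produce a single coloring of the non-tree edges that simultaneously supports all these detours: each level $m$ offers only the two colors $3m-1,3m$, yet an edge between $V_{m-1}$ and $V_m$ may be required to play different roles in the triangles of several tree edges, and the case $x\in V_{m-1}$ even spills a color into level $m-1$. Resolving this amounts to showing that the non-tree level-$m$ edges can be $2$-colored with $\{3m-1,3m\}$ so that, for every tree edge of level $m$, its chosen triangle yields a rainbow detour; I expect to secure this by choosing the BFS tree and the triangles so as to prefer same-level third vertices, and by phrasing the remaining constraints as an orientation/alternation condition on the level-$m$ edges that is always satisfiable. Granting such a coloring, every pair of vertices is rainbow connected using at most the $3r$ colors of $C_1\cup\cdots\cup C_r$, which gives $rc(G)\le 3rad(G)$ and proves the theorem.
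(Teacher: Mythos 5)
Your framework---a BFS layering from a center with a fresh palette of three colors per level---is exactly the paper's, but the mechanism you propose for resolving same-level color collisions is different and, as you yourself flag, its crucial step is not proved. The gap is the final paragraph: you need one \emph{global} $2$-coloring of the non-tree level-$m$ edges with $\{3m-1,3m\}$ that makes every tree edge's chosen triangle a rainbow detour, and you only ``expect to secure this.'' There are concrete obstructions. If the triangle on the tree edge $ap(a)$ has third vertex $x\in V_m$ with $p(x)=p(a)$, then $xp(a)$ is itself a tree edge and is forced to carry the tree color $3m-2$, so the detour $a,x,p(a)$ collides with the other branch's tree edge at level $m$ no matter how $ax$ is colored. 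A single up-edge may also be the designated detour edge for several tree edges with incompatible demands, and the case $x\in V_{m-1}$ pushes a level-$m$ color onto a level-$(m-1)$ edge, breaking the palette discipline your rainbow count relies on. None of these is addressed, so the argument does not close.

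The paper avoids repairing a fixed tree altogether. It uses the triangle hypothesis to show that $G[N(x)]$ has no isolated vertex, and that any vertex isolated in $G[N_m(x)]$ for $m\ge 2$ has at least two neighbors in $N_{m-1}(x)$. It then takes a spanning forest of the non-isolated part of each layer, with bipartition $(X_m,Y_m)$, and colors up-edges into $X_m$ with $\alpha_m$, up-edges into $Y_m$ with $\beta_m$, and intra-level forest edges with $\gamma_m$ (isolated vertices get one up-edge of each of $\alpha_m,\beta_m$). The point (their Lemma~1) is that every vertex of level $m$ then has \emph{two} descents to level $m-1$ whose color sets are disjoint subsets of $\{\alpha_m,\beta_m,\gamma_m\}$ --- either $\{\alpha_m\}$ versus $\{\gamma_m,\beta_m\}$, or $\{\alpha_m\}$ versus $\{\beta_m\}$. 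Any two vertices are then joined by descending both to the center, choosing at each shared level disjoint color sets for the two halves. I would recommend abandoning the tree-plus-detours plan and reworking your argument around this ``two color-disjoint descents per level'' invariant; it is exactly what the triangle condition buys, and it makes the global consistency problem you ran into disappear.
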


Consequently, we have the following corollary.
\begin{corollary}
Let $G$ be a graph with minimum degree at least $3$. Then
$rc(L(G))\leq 3 rad(L(G))\leq 3  (rad(G)+1)$.
\end{corollary}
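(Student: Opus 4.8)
The plan is to apply Theorem~2 to the line graph $L(G)$, which reduces the corollary to two verifications: that $L(G)$ meets the hypotheses of Theorem~2, and that $rad(L(G))\leq rad(G)+1$. Throughout I assume $G$ is connected, so that $rad(G)$ and $rad(L(G))$ are defined.

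First I would check the hypotheses. Recall that a vertex of $L(G)$ is an edge of $G$, and two such vertices are adjacent precisely when the corresponding edges of $G$ share an endpoint. An edge of $L(G)$ therefore corresponds to a pair of edges $e_1=xy$ and $e_2=yz$ of $G$ meeting at a common vertex $y$. Since the minimum degree of $G$ is at least $3$, the vertex $y$ carries a third incident edge $e_3$, distinct from $e_1$ and $e_2$; as $e_1,e_2,e_3$ pairwise share $y$, they form a triangle of $L(G)$ containing the edge $e_1e_2$. Hence every edge of $L(G)$ lies on a triangle, and in particular on a cycle, so $L(G)$ has no bridge, i.e.\ $L(G)$ is bridgeless. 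Thus Theorem~2 applies and yields $rc(L(G))\leq 3\,rad(L(G))$.

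The remaining inequality $rad(L(G))\leq rad(G)+1$ is where the real (though modest) work lies. I would use the standard description of distances in a line graph: for distinct edges $e,f$ of $G$,
$$d_{L(G)}(e,f)=\min\{\,d_G(p,q): p\in e,\ q\in f\,\}+1,$$
which one verifies by translating a shortest edge-sequence in $L(G)$ into a walk in $G$ and back. Granting this, let $u$ be a center of $G$, so $ecc_G(u)=rad(G)$, and pick any edge $e_0=uv$ incident to $u$. For an arbitrary edge $f=xy$ of $G$ we have $\min\{d_G(u,x),d_G(u,y)\}\leq d_G(u,x)\leq ecc_G(u)=rad(G)$, and combining with the displayed formula (taking $p=u$) gives $d_{L(G)}(e_0,f)\leq rad(G)+1$. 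Since $f$ was arbitrary, $ecc_{L(G)}(e_0)\leq rad(G)+1$, whence $rad(L(G))\leq rad(G)+1$. Multiplying by $3$ and chaining with the bound from Theorem~2 completes the proof.

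I expect the line-graph distance formula to be the only step requiring care, and even there the content is routine once the correspondence between paths in $L(G)$ and walks in $G$ is set up cleanly; the triangle and bridgeless verifications are immediate consequences of the minimum degree condition $\delta(G)\geq 3$.
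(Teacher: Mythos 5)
Your proposal is correct and follows exactly the route the paper intends (the paper states this corollary as an immediate consequence of Theorem~2 without writing out a proof): the minimum degree condition puts every edge of $L(G)$ in a triangle and hence makes $L(G)$ bridgeless, and the standard line-graph distance formula gives $rad(L(G))\leq rad(G)+1$. All three verifications in your write-up are sound.
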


Since for every $k=2, 3$ deciding if a graph $G$ has $rc(G)=k$ is
NP-Hard, and a graph $G$ with $rc(G)=2, 3$ has $diam(G)\leq 2, 3$,
respectively, to study graphs with a small diameter 2 or 3 is of
significance. In a foregoing papers \cite{lili, DL}, we got the
following result.
\begin{theorem}{\upshape\cite{lili, DL}}
Let $G$ be a bridgeless graph with diameter $2$. Then $rc(G)\leq 5$,
and the upper bound is sharp.
\end{theorem}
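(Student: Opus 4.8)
The plan is to prove the upper bound by an explicit $5$-edge-coloring organized around a single vertex, and to establish sharpness by exhibiting a graph on which no $4$-coloring can succeed. If $G$ is complete then $rc(G)=1$, so I may assume $diam(G)=2$ exactly. Fix any vertex $v$ and set $A=N(v)$ and $B=V(G)\setminus N[v]$; since $diam(G)=2$, every vertex of $B$ lies at distance exactly $2$ from $v$ and hence has at least one neighbour in $A$. I record three structural facts that drive the argument: (i) any two vertices of $A$ share the common neighbour $v$, so they are joined by the length-$2$ path through $v$; (ii) every $b\in B$ is reached from $v$ by a path $v\,a\,b$ with $a\in A$; and (iii) because $G$ is bridgeless, every vertex has degree at least $2$ and every edge lies on a cycle, so whenever a short canonical path is spoiled by a repeated color there is always an alternative route to fall back on. The tension I must manage throughout is that a cycle-detour supplied by (iii) can be long, while the budget is only $5$; resolving this forces me to combine bridgelessness with the diameter-$2$ condition so that every replacement path still has length at most $5$.

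Next I would design the coloring. A natural scheme is to reserve two colors, say $1$ and $2$, for the star edges $v\,a$: partition $A=A_1\cup A_2$ and color $v\,a$ by the part containing $a$. This at once rainbow-connects $v$ to all of $A$, connects $v$ to each $b\in B$ through its chosen neighbour in $A$, and connects any two $A$-vertices lying in different parts via the path through $v$. The remaining colors $3,4,5$ are then to be distributed over the edges inside $A$, the edges between $A$ and $B$, and the edges inside $B$. The design goal is to arrange these so that the three remaining pair-types, namely (a) two $A$-vertices in the same part, (b) an $A$-vertex and a $B$-vertex, and (c) two $B$-vertices, each receive a rainbow path: I would use a shortest path of length at most $2$ whenever its edges happen to get distinct colors, and otherwise invoke Fact (iii) together with diameter $2$ to exhibit a short alternate.

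The verification proceeds by a case analysis on the pair-type and on whether the two endpoints are adjacent or merely share a common neighbour (and, in the latter case, whether that neighbour lies in $A$ or in $B$). I expect the $B$--$B$ pairs to be the crux: two distance-$2$ vertices of $B$ may be forced to route up into $A$ and possibly back down into $B$, so their canonical path can reuse a color, and I must use the cycle structure from bridgelessness to produce an alternate rainbow route while keeping both its length at most $5$ and the total palette at five. The genuine obstacle is \emph{global compatibility}: making all the local repairs hold simultaneously under one fixed $5$-coloring, rather than fixing one pair at the cost of breaking another; this is where the careful bookkeeping lives, and it is likely where the proposed $2+3$ split must be refined.

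Finally, for sharpness I would exhibit one explicit bridgeless graph of diameter $2$ and prove the matching lower bound $rc(G)\ge 5$. Fixing a suitable critical pair of vertices, I would show that in any edge-coloring using only four colors every path between them of length at most $5$ is forced to repeat a color, so that no rainbow path exists; combined with the upper bound this shows the constant $5$ cannot be improved.
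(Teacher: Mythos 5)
This statement is quoted in the paper from \cite{lili, DL} and is not reproved here, so the fair comparison is with the cited proofs and with the analogous machinery this paper does develop (the spanning-forest/bipartition device of Lemma~1 and the proof of Theorem~2, and the hanging-path construction of Example~2). Measured against that, your proposal has a genuine gap on both halves of the theorem. For the upper bound, everything after the two star colors is left unspecified: you never say how the colors $3,4,5$ are actually distributed over $E(G[A])$, $E[A,B]$ and $E(G[B])$, only that they are ``to be distributed'' so that a ``design goal'' is met, and you yourself concede that the $2+3$ split ``must be refined'' and that the global compatibility ``is where the careful bookkeeping lives.'' That bookkeeping \emph{is} the theorem. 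The cases you correctly identify as the crux --- two vertices of $B$ all of whose common neighbours lie in $A$, and vertices of $B$ with a unique neighbour in $A$ (the only place bridgelessness is genuinely needed, to manufacture a second route) --- are precisely what forces the known proofs to partition $A$ and $B$ further and to choose the bipartition of $A$ from a spanning forest of $G[A]$ so that the two star colors and the $A$--$B$ colors cooperate; none of that is present, so no verifiable coloring exists in your write-up.

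The sharpness half is missing entirely: you state that you ``would exhibit'' an extremal graph but give none and prove no lower bound. A concrete construction in the spirit of Example~2 of this paper works: hang a path $(v_i,v_{i,1},v_{i,2})$ of length $2$ off each vertex $v_i$ of $K_n$ with $n\geq 17$ and identify all the vertices $v_{i,2}$ into a single vertex $w$; the result is bridgeless of diameter $2$, and under any $4$-coloring the pigeonhole principle applied to the $4^2=16$ possible colorings of the hanging paths yields two pendant vertices $v_{1,1},v_{2,1}$ whose every connecting path (through $w$ or through the clique) repeats a color, so $rc\geq 5$. Without such a construction and counting argument the word ``sharp'' in the statement is unsupported. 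As it stands the proposal is a plausible plan whose two essential components --- the explicit coloring with its case verification, and the extremal example --- are both deferred rather than supplied.
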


In this paper, we will show the following result.
\begin{theorem}
Let $G$ be a bridgeless graph $diam(G)=3$. Then $rc(G)\leq 9$.
\end{theorem}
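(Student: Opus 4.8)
Let $G$ be a bridgeless graph with $\text{diam}(G)=3$. Then $rc(G) \leq 9$.

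Let me think about how to prove this.The plan is to imitate, and then substantially strengthen, the layered argument behind the $3\,rad$ bound of Theorem~2. Since $rad(G)\le diam(G)=3$, I would first fix a center $u$ with $ecc(u)=rad(G)=r\le 3$ and run a breadth-first search from $u$, obtaining layers $V_0=\{u\},V_1,\dots,V_r$ where $V_i$ consists of the vertices at distance $i$ from $u$. As $G$ is connected and bridgeless, it is $2$-edge-connected, so every edge lies on a cycle. The target coloring devotes three fresh colors to each level, say the edges running between $V_{i-1}$ and $V_i$ draw their colors from the palette $\{3i-2,3i-1,3i\}$; these palettes are pairwise disjoint across levels, for a total of $3r\le 9$ colors (and the slack when $r<3$ makes the bound $9$ comfortably safe). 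The point of disjoint palettes is that any ``up-path'' from a vertex $v\in V_i$ toward $u$ uses at most one color from each level's palette and is therefore automatically rainbow.

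The heart of the construction is the per-level coloring, and this is where bridgelessness and the distance-$3$ hypothesis must do the work that chordality did in Theorem~2. For each $v\in V_i$ I would designate an up-edge to a parent in $V_{i-1}$; because $G$ has no bridge, this up-edge sits on a cycle, which supplies an alternate descent into $V_{i-1}$. I would use the three available colors at level $i$ to color the designated up-edges together with these alternate edges so that, for any two vertices crossing the same level, one can select routes of \emph{differing} colors within that level's palette. This is precisely the analogue, for a bridgeless diameter-$3$ graph, of the ``third color from the triangle'' trick: where Theorem~2 produces a triangle at every edge, here I must extract the alternate route purely from $2$-edge-connectivity, using the diameter bound to guarantee that the relevant cycles stay short and that alternate parents lie nearby.

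With the coloring in hand, correctness is checked by cases on the pair $(\mathrm{layer}(a),\mathrm{layer}(b))$. The generic pair is joined by concatenating the rainbow up-path of $a$ with the reversed up-path of $b$ at $u$; its length is at most $2r\le 6\le 9$, the disjoint level-palettes handle every level except those both halves traverse, and the within-level color separation arranged above removes the remaining repetitions, so the combined path is rainbow. Vertices in the same layer or in adjacent layers are handled by shorter direct routes (including the single-level detours already colored), again staying rainbow by the palette structure.

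The main obstacle I expect is exactly the per-level step: guaranteeing that \emph{three} colors per level suffice to separate the up-route of $a$ from the down-route of $b$ for \emph{all} pairs simultaneously under one fixed coloring, rather than a number of colors growing with the graph. Unlike the chordal case, no triangle is available, so the separating edge has to come from a cycle whose existence is only promised by bridgelessness; controlling how these cycles overlap across many vertices, and in particular handling the level incident to $u$ where two deep-lying vertices may both need to cross through $u$ on distinctly colored level-$1$ edges, will likely force a delicate sub-case analysis (according to whether the two vertices share a parent, lie on a common short cycle, or are linked only through a length-$3$ detour). Establishing that these configurations never demand a fourth color at any level is the crux on which the bound $rc(G)\le 9$ rests.
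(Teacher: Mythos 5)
Your proposal is not a proof but a plan, and the plan's central step is exactly the one you concede you cannot carry out: you never establish that three colors per BFS level suffice once the triangle hypothesis is dropped. The place where it breaks is concrete. In the layered argument (the paper's proof of Theorem~2), the hypothesis that every edge lies in a triangle is used at each level to show that every vertex of $N_i(x)$ either has a neighbor inside $N_i(x)$ or has at least two neighbors in $N_{i-1}(x)$; this is what lets one bipartition each level by a spanning forest and get by with a palette of three colors whose ``alternate descent'' never leaves the two adjacent levels. With only bridgelessness, an isolated vertex $y$ of $G[N_i(x)]$ with a unique neighbor $z$ in $N_{i-1}(x)$ still has $yz$ on a cycle, but that cycle may climb up through $N_{i+1}(x)$ and re-enter $N_{i-1}(x)$ far away; the detour then consumes colors from several level palettes and your invariant that any up-path uses at most one color per palette collapses. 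You acknowledge this in your final paragraph, but acknowledging the crux is not the same as resolving it, so the argument has a genuine gap and does not yield the bound.

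The paper sidesteps this by a different decomposition. It first disposes of the case in which every edge of $G$ lies in a triangle by quoting Theorem~2, giving $rc(G)\leq 3rad(G)\leq 3diam(G)=9$. In the remaining case it fixes a specific edge $e=uv$ contained in no triangle and partitions $V(G)$ relative to that edge rather than relative to a center: $A=N(u)\setminus\{v\}$ and $B=N(v)\setminus\{u\}$ (disjoint precisely because $e$ is in no triangle), then second-neighborhood classes $X,Y,Z$ and third-neighborhood classes $W,I,J,K$, each further subdivided. The hypothesis $diam(G)=3$ forces every vertex into one of these classes, bridgelessness is used only locally (Observation~1 and Lemma~1) to route around pendant-like vertices, and a fixed $9$-coloring adapted to this partition is verified by exhaustive case analysis. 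If you wish to pursue your layered route you would need to prove a per-level lemma for which you currently offer no argument; as written, the proposal does not prove the theorem.
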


Since $rad(G)\leq diam(G)$, from Theorem~2 we get that if $G$ is a
bridgeless graph with $diam(G)=3$ such that every edge of $G$ is
contained in a triangle, then $rc(G)\leq 3diam(G)=9$. So, to show
Theorem~3, we only need to show the following result.
\begin{theorem}
Let $G$ be bridgeless graph with $diam(G)=3$. If $G$ possesses an
edge not contained in any triangle, then $rc(G)\leq 9$.
\end{theorem}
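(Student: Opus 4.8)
Since the complementary case, in which every edge of $G$ lies in a triangle, is already disposed of by Theorem~2 together with $rad(G)\le diam(G)=3$, the plan is to fix once and for all an edge $uv$ that lies in no triangle. The first step is the remark that this forces $N(u)\cap N(v)=\varnothing$, since any common neighbour of $u$ and $v$ would complete a triangle on $uv$. I would then classify every vertex $x$ by the pair $(d(u,x),d(v,x))$. As $uv\in E(G)$ we have $|d(u,x)-d(v,x)|\le 1$, as $diam(G)=3$ both coordinates lie in $\{0,1,2,3\}$, and the emptiness of $N(u)\cap N(v)$ rules out the pair $(1,1)$; this leaves only the classes $(0,1),(1,0),(1,2),(2,1),(2,2),(2,3),(3,2),(3,3)$. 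In particular $A:=N(u)\setminus\{v\}$ sits entirely at distance $2$ from $v$ and $B:=N(v)\setminus\{u\}$ entirely at distance $2$ from $u$, with $A\cap B=\varnothing$. This bookkeeping is what keeps the layers near $u$ from interfering with those near $v$.

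The engine of the argument would be a reduction together with a walk-extraction principle. Suppose we have a colouring in which every vertex is joined to $u$ by a rainbow path using only colours from a palette $S_u$, every vertex is joined to $v$ by a rainbow path using only colours from a disjoint palette $S_v$, and $uv$ carries a further private colour. Then for any $x,y$ the concatenation of the $x$-to-$u$ path, the edge $uv$, and the $v$-to-$y$ path is a walk all of whose edge-colours are distinct, and a walk with pairwise distinct edge-colours always contains a rainbow path between its ends (delete any closed subwalk and the colours remain distinct). Thus such a colouring is a rainbow colouring. Taking $S_u=\{1,2,3\}$, $S_v=\{4,5,6\}$ and colour $7$ on $uv$, I would try to realise the $S_u$-paths from a BFS tree rooted at $u$ with its level-$i$ edges coloured $i$, and the $S_v$-paths from a BFS tree rooted at $v$ with its level-$j$ edges coloured $3+j$; each upward path is then automatically rainbow because it meets every level once. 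The two remaining colours $8,9$ give exactly the slack to repair conflicts, and, since we are free to choose for each pair $(x,y)$ which endpoint plays which role, the final argument is really casework over the eight classes rather than one rigid template.

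The genuine obstacle is that a single edge cannot carry two colours, so the two in-trees may not disagree on any edge they share; this is also why the naive count $2\cdot 3+1=7$ must fail. That edge-sharing is a real phenomenon is shown by graphs with bridges, such as a star, where the unique upward edge is forced into both structures at once and the whole scheme collapses—so bridgelessness must be used essentially. The clean middle guaranteed by $N(u)\cap N(v)=\varnothing$ confines all conflicts to the boundary edges incident with $u$ or $v$ and to vertices at distance $3$ from $u$ or from $v$ (the classes $(2,3),(3,2),(3,3)$), where a vertex can be forced to use one and the same neighbour to descend toward $u$ and toward $v$. I expect the heart of the proof to be showing, via $2$-edge-connectivity and the diameter-$3$ bound, that whenever such a collision occurs there is an alternative short route on one side, so that the two palettes can be kept apart and the finitely many surviving conflict patterns absorbed by the spare colours $8$ and $9$; the hardest single case should be rainbow-connecting two vertices of class $(3,3)$, where both halves of the route are as long as the diameter allows. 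Assembling these pieces yields a colouring with the nine colours $\{1,\dots,9\}$, giving $rc(G)\le 9$.
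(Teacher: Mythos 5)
Your opening moves match the paper's: dispose of the all-edges-in-triangles case via Theorem~2, fix an edge $uv$ in no triangle, observe $N(u)\cap N(v)=\varnothing$, and stratify the remaining vertices by their distances to $u$ and $v$ (your eight classes correspond to the paper's $A$, $B$, $Z$, $X$, $Y$ and $W\cup I\cup K\cup J$). The walk-extraction principle is also sound: a closed walk with pairwise distinct edge colours does contain a rainbow path between its ends. But the proof stops exactly where the theorem begins. You write that you ``expect the heart of the proof to be showing'' that every palette collision can be repaired using colours $8$ and $9$; that expectation is the entire content of the result, and nothing in the proposal establishes it. The paper spends its whole third section on this: it refines the strata into roughly twenty classes ($A_1,A_2,A_3$, $B_1,B_2,B_3$, $X_1,\dots,X_4$, $Y_1,\dots,Y_4$, $Z$, $W$, $I$, $K$, $J_0,\dots,J_4$), invokes bridgelessness to guarantee that ``dead-end'' vertices (those with no neighbour pointing toward the middle) have a second escape route (Observation~1), exhibits an explicit $9$-colouring built from a reusable forest-bipartition gadget (Lemma~1), and then verifies rainbow connectivity pair-class by pair-class in several lemmas and two large tables. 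None of that construction or verification appears in your argument.

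Two specific assertions you lean on are also unsubstantiated and, as stated, dubious. First, the claim that the disjointness of $N(u)$ and $N(v)$ ``confines all conflicts to the boundary edges incident with $u$ or $v$'' and to the distance-$3$ classes is not obvious: an edge joining a vertex of class $(1,2)$ to one of class $(2,3)$, or one of class $(2,3)$ to one of class $(3,2)$, can be demanded by both BFS in-trees, so conflicts can occur well inside the middle layers. Second, the level-coloured double-BFS-tree template you propose is not the colouring the paper uses, and it is not clear it can be repaired with only two spare colours without the finer partition: the genuinely hard vertices are not just those of class $(3,3)$ but those whose only neighbours lie sideways or backwards in their own layer (the sets $A_2,A_3,X_2,X_3,X_4$, etc.), which a BFS tree handles badly and which is precisely where the paper needs bridgelessness and the extra colours. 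In short, the skeleton is right and compatible with the paper's strategy, but the proof of the statement is missing.
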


This paper is organized as follows. In Section~$2$, we prove
Theorem~2. In Subsection~$3.1$, we present a vertex set partition of
a bridgeless graph with $diam(G)=3$, which admits an edge not
contained in any triangle, together with a $9$-edge-coloring under
this partition. In Subsection~$3.2$, we show that the above
$9$-edge-coloring is rainbow and get Theorem~5.

\section{Proof of Theorem~$2$}

Let $S$ be a set of vertices. We use $G[S]$ to denote the graph
$(S,E')$, which is called the induced subgraph of $G$ by $S$, where
$E'$ is the set of edges of $G$ whose two ends are contained in $S$.
Let $S$ and $S'$ be two disjoint vertex sets. We use $E[S,S']$ to
denote the set of edges having one endpoint in each one of $S$ and
$S'$.

\noindent{\itshape Proof of Theorem~$2$:} Let $x$ be a center in
$G$, $N_i(x)=\{y\ |\ d(x,y)=i\}$, $N_i[x]=\{y\ |\ d(x,y)\leq i\}$,
and $G_i=G[N_i[x]],\ i=1,2,\ldots,r$. Specially, $N(x)$ simply for
$N_1(x)$. We shall color the edges of $G$ by the following $r$ steps
such that $G$ is rainbow connected.

{\itshape First step:} We claim that $G[N(x)]$ has no isolated
vertex. Given $y\in N(x)$, let $z$ be a common neighbor of $x$ and
$y$. Clearly, $z\in N(x)$. So $G[N(x)]$ has no isolated vertex. Thus
$G[N(x)]$ has a spanning forest $F_1$ without isolated vertex. Let
$X_1$ and $Y_1$ be any one of the bipartition defined by this forest
$F_1$. We provide a $3$-edge-coloring $c_1: E(G[N[x]])\rightarrow
\{1,2,3\}$ of $G[N[x]]$ as follows: $c_1(e)=1$ if $e\in E[x,X_1]$;
$c_1(e)=2$ if $e\in E[x,Y_1]$; $c_1(e)=3$ if $e\in E[X_1,Y_1]$. We
show that $c_1$ is a rainbow edge-coloring of $G[N[x]]$. Pick any
two distinct vertices $u$ and $v$ in $G[N[x]]$. If one of $u$ and
$v$ is $x$, then $u,v$ is a rainbow path. If $u\in X$ and $v\in Y$,
then $u,x,v$ is a rainbow path connecting $u$ and $w$. Thus we
suppose $u,v\in X$ or $u,v\in Y$, without loss of generality, say
$u,v\in X$. Then $u,w,x,v$ is a rainbow path connecting $u$ and $v$,
where $w$ is a neighbor of $u$ in $F_1$ (contained in $Y$).
Therefore, $c_1$ is a rainbow coloring of $G_1$.

{\itshape Second step:} We claim that every isolated vertex of
$G[N_2(x)]$ admits at least two neighbors in $N(x)$. Given an
isolated vertex $y\in N_2(x)$, let $z$ be a neighbor of $y$ in
$N(x)$ and $w$ be a common neighbor of $y$ and $z$. If $w\in
N_2(x)$, then $z$ admits a neighbor in $G[N_2(x)]$, a contradiction.
Otherwise, $w\in N(x)$, that is, $z$ admits at least two neighbors
in $N(x)$. Thus, every isolated vertex of $G[N_2(x)]$ admits at
least two neighbors in $N(x)$. Let $Z_2=\{z\ |\ z\ is\ an\ isolated\
vertex\ in\ G[N_2(x)]\}$, $F_2$ be a spanning forest in
$G[N_2(x)\setminus Z_2]$, and $X_2,Y_2$ be any one of the
bipartition defined by this forest $F_2$. We provide a
$3$-edge-coloring $c_2: E(G[N_2(x)])\cup E[N(x),N_2(x)]\rightarrow
\{4,5,6\}$ defined as follows: $c_2(e)=4$ if $e\in E[N(x),X_2]$;
$c_2(e)=5$ if $e\in E[N(x),Y_2]$; $c_2(e)=6$ if $e\in E[X_2,Y_2]$;
for any vertex $z\in Z$, color one incident edge in $E[z,N(x)]$ by
color $4$, the others in $E[z,N(x)]$ by color $5$. It is easy to
check that $c_1\cup c_2$ is a rainbow coloring of $G_2$.

By similar arguments, we can give $G_i,\, i=1,2,\ldots,r$, a
$3i$-rainbow coloring. So $rc(G)=rc(G_r)\leq
3r$.\hfill$\sqcap\hskip-0.7em\sqcup$

\noindent{\bf Example~1:} Let $r$ and $i$ be positive integers such
that $1\leq i\leq (3r-1)^r+1$, $P^i_r$ be a path with vertex set
$\{u^i_0,u^i_1,\ldots,u^i_r\}$, where $u^i_{j_1}$ and $u^i_{j_2}$
are adjacent if and only if $|j_1-j_2|=1$. $H^i_r$ is a graph
obtained from $P^i_r$ by adding vertices $v^i_j$ and edges
$v^i_{j}u^i_{j-1}$ and $v^i_ju^i_j,\ j=1,2,\ldots,r$. Furthermore,
let $G$ be the graph obtained from $H^i_r$, where $1\leq i\leq
(3r-1)^r+1$, by identifying $u^i_0$ as a vertex $u_0$. Clearly, $G$
has radius $r$, and $u_0$ is a center of $G$.

We claim that $G$ has no $3r-1$-rainbow coloring. Let $c$ be any
$r-1$-edge-coloring of $G$. For any path with length $r$, there
exist at most $(3r-1)^r$ distinct colorings. Since there exist
$(3r-1)^r+1$'s paths $P^i_r$, at least two, without loss of
generality, say $P^1_r$ and $P^2_r$, have the some coloring, that
is, $u^1_ju^1_{j+1}$ and $u^2_ju^2_{j+1}$ have the same color. Thus
any rainbow path between $u^1_r$ and $u^2_r$ contains at most one of
$u^1_ju^1_{j+1}$ and $u^2_ju^2_{j+1}$. Clearly, any path satisfying
the above requirement has a length at least $3r$. Therefore,
$rc(G)\geq 3r-1$.

Now we show a $3r$-rainbow coloring $c$ of $G$ as follows:
$c(u^i_j,u^i_{j+1})=3j+1$, $c(u^i_j,v^i_{j+1})=3j+2$, and
$c(u^i_{j+1},u^i_{j+1})=3(j+1),\ j=0,2,\ldots,r-1$,
$i=1,2,\ldots,(3r-1)^k$. It is easy to check that the above
$3r$-edge-coloring is a rainbow edge-coloring of $G$.

\section{Proof of Theorem~5}

Let $c$ be a rainbow edge-coloring of $G$. If an edge $e$ is colored
by $i$, we say that $e$ is an $i$-$color\ edge$. Let $P$ be a
rainbow path. If $c(e)\in \{i_1,i_2,\ldots,i_r\}$ for any $e\in
E(P)$, then $P$ is called an $\{i_1,i_2,\ldots,i_r\}$-$rainbow\
path$.

\subsection{Vertex set partition and edge-coloring}

In this subsection, Let $e=uv$ be an edge not contained in any
triangle, and $A$ and $B$ denote $N(u)\setminus \{v\}$ and
$N(v)\setminus \{u\}$, respectively. Furthermore, $X,Y$ and $Z$
denote the sets $\{x\in N(A)\setminus N(B)\, |\, x\not\in A\cup
B\cup \{u,v\}\}$, $\{x\in N(B)\setminus N(A)\, |\, x\not\in A\cup
B\cup \{u,v\}\}$, and $\{x\in N(A)\cap N(B)\}$, respectively.

Let $D=A\cup B\cup X\cup Y\cup Z\cup \{u,v\}$. For $x\in
V(G)\setminus D$, we have $d(x,u)=d(x,v)=3$ since $diam(G)=3$, that
is, $N(x)\cap N(X)\neq\emptyset$ and $N(x)\cap N(Y)\neq\emptyset$.
We partition this set based on the distribution of the neighbors of
$x$.
\begin{align*}
& W=(N(X)\cap N(Y))\setminus D;\\
& I=(N(X)\cap N(Z))\setminus (W\cup D);\\
& K=(N(Y)\cap N(Z))\setminus (W\cup I\cup D);\\
& J=V(G)\setminus (W\cup I\cup K\cup D).
\end{align*}
See Figure $1$ for details. At this point, we partition $A,B,X$ and
$Y$ as follows:
\begin{align*}
& A_1=\{x\in A\ |\ x\ \mathrm{has\ neighbors\ in}\ B\cup X\cup Z\},\\
& A_2=\{x\in A\setminus A_1\ |\ x\ \mathrm{is\ not\ an \ isolated\ vertex\ in}\ G[A\setminus A_1]\},\\
& A_3=A\setminus (A_1\cup A_2),\\
& B_1=\{x\in B\ |\ x\ \mathrm{has\ neighbors\ in}\ A\cup Y\cup Z\},\\
& B_2=\{x\in B\setminus A_1\ |\ x\ \mathrm{is\ not\ an \ isolated\ vertex\ in}\ G[B\setminus B_1]\},\\
& B_3=B\setminus (B_1\cup B_2),\\
& X_1=\{x\in X\ |\ x\ \mathrm{has\ neighbors\ in}\ Y\cup Z\cup I\cup W\}.\\
& X_2=\{x\in X\setminus X_1\ |\ x\ \mathrm{is\ not\ an \ isolated\ vertex\ in}\ G[X\setminus X_1]\},\\
& X_3=\{x\in X\setminus (X_1\cup X_2)\ |\ N(x)\in A\},\\
& X_4=X\setminus (X_1\cup X_2\cup X_3),\\
& Y_1=\{y\in Y\ |\ y\ \mathrm{has\ neighbors\ in}\ X\cup Z\cup K\cup W\},\\
& Y_2=\{x\in Y\setminus Y_1\ |\ x\ \mathrm{is\ not\ an \ isolated\ vertex\ in}\ G[Y\setminus Y_1]\},\\
& X_3=\{x\in Y\setminus (Y_1\cup Y_2)\ |\ N(x)\in B\},\\
& X_4=Y\setminus (Y_1\cup Y_2\cup Y_3).
\end{align*}

\begin{figure}[h,t,b]
\begin{center}
\scalebox{0.8}[0.8]{\includegraphics{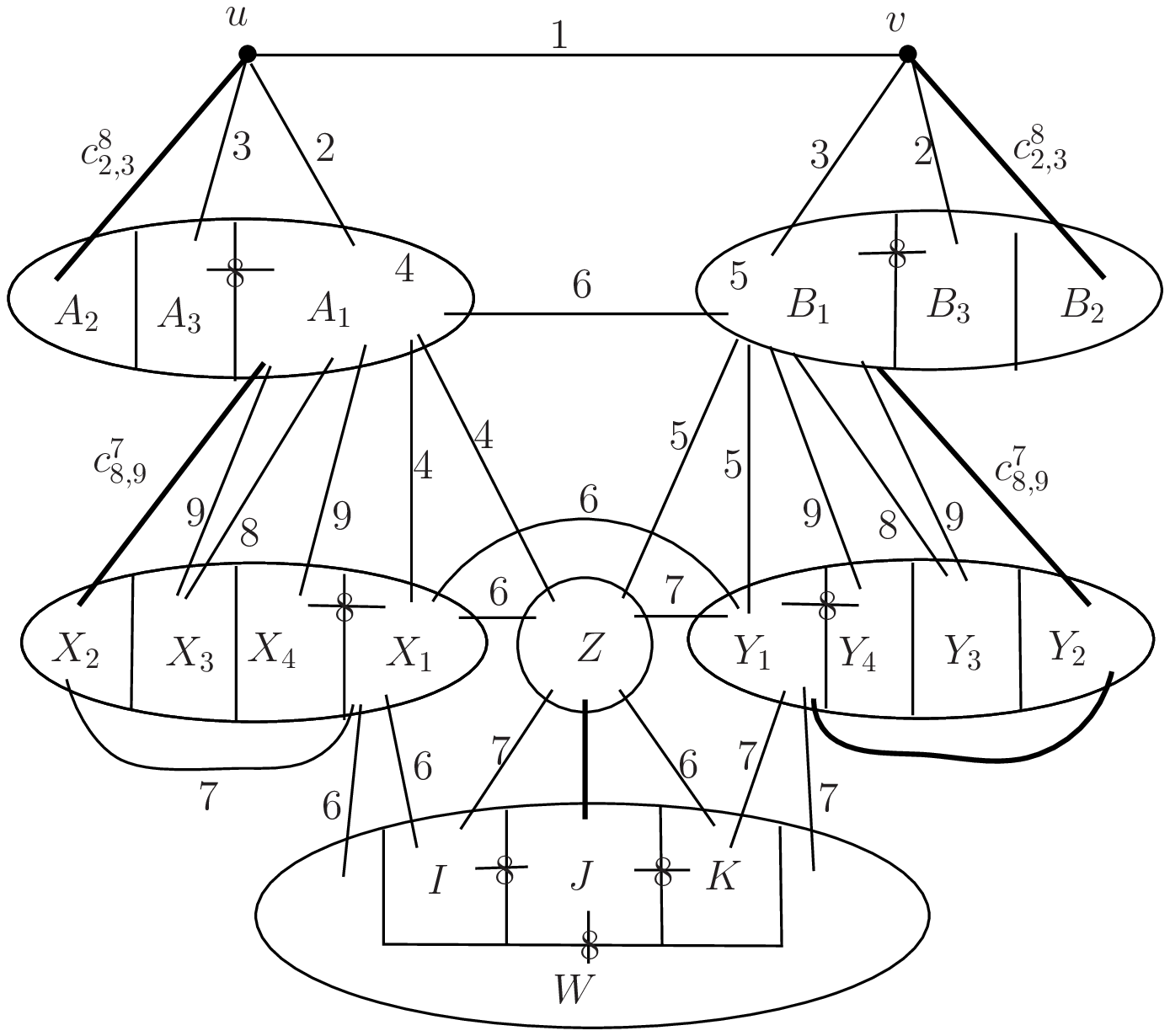}} \vspace*{20pt}

Figure 1. An part edge-coloring of $G$
\end{center}
\end{figure}

Note that, in Figure~$1$, if $x$ and $y$ lie in distinct ellipses
and there exists no edge joining the two ellipses, then $x$ and $y$
are nonadjacent in $G$. In general, the edges drawn in Figure~$1$ do
not represent that the subgraph is complete bipartite. The following
observation holds for the above vertex set partition since $G$ is
bridgeless.

\begin{observation}
$(1)$ For every $x\in A_3$, $N(x)\cap A_1\neq\emptyset$. $(2)$ For
every $x\in B_3$, $N(x)\cap B_1\neq\emptyset$. $(3)$ For every $x\in
X_4$, $N(x)\cap X_1\neq\emptyset$. $(4)$ For every $x\in Y_4$,
$N(x)\cap Y_1\neq\emptyset$.
\end{observation}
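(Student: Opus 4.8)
The four statements are pairwise symmetric: part $(2)$ follows from part $(1)$, and part $(4)$ from part $(3)$, simply by interchanging the roles of $u,v$ and of the corresponding sets $A\leftrightarrow B$, $X\leftrightarrow Y$. So the plan is to prove only $(1)$ and $(3)$.

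For $(1)$, the first step is to localise the neighbourhood of a vertex $x\in A_3$. Since $x\in A=N(u)\setminus\{v\}$ we have $d(x,u)=1$, so every neighbour of $x$ lies in the ball $N_2[u]$, and I would check that this ball is exactly $\{u,v\}\cup A\cup B\cup X\cup Z$: indeed $B\subseteq N_2(u)$ through $v$, while $X,Z\subseteq N(A)\subseteq N_2(u)$, whereas all of $Y,W,I,K,J$ sit at distance $3$ from $u$ and are therefore excluded. Because $uv$ lies in no triangle we have $A\cap B=\emptyset$ and $x$ is not adjacent to $v$. Now I invoke the defining conditions of $A_3=A\setminus(A_1\cup A_2)$: since $x\notin A_1$ it has no neighbour in $B\cup X\cup Z$, and since $x\notin A_2$ it is isolated in $G[A\setminus A_1]$, hence has no neighbour in $A\setminus A_1$. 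Combining these facts forces $N(x)\subseteq\{u\}\cup A_1$. The last step uses bridgelessness: if $x$ had degree $1$ the edge $ux$ would be a pendant edge, hence a bridge, so $x$ must have a second neighbour, which can only lie in $A_1$; this yields $N(x)\cap A_1\neq\emptyset$.

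For $(3)$ I would again begin by localising $N(x)$ for $x\in X\setminus(X_1\cup X_2)$. A vertex of $X$ is adjacent neither to $u$ nor to $v$ (that would place it in $A\cup B\cup\{u,v\}$), and since $X\subseteq N(A)\setminus N(B)$ it has no neighbour in $B$; the remaining candidate sets are $A,X,Y,Z,W,I,K,J$. Two of these must be eliminated by structural arguments. The set $K$ disappears from the definitions alone: if $x\in X$ were adjacent to some $k\in K\subseteq N(Z)$, then $k\in(N(X)\cap N(Z))\setminus(W\cup D)=I$, contradicting $K\cap I=\emptyset$. The genuinely delicate case---and the step I expect to be the main obstacle---is showing that there is \emph{no} edge between $X$ and $J$. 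Here I would argue by contradiction: an edge $xj$ with $x\in X$, $j\in J$ forces $j\notin N(Y)\cup N(Z)$ (otherwise $j\in W$ or $j\in I$), and then I would examine a shortest $j$--$v$ path $j,p,q,v$. Since $d(j,u)=d(j,v)=3$, the vertex $q\in N(v)$ cannot be $u$ (that would give $j\in N(A)$ and $d(j,u)\leq 2$), so $q\in B$ and $p\in N(B)$; but a short check gives $N(B)\subseteq\{v\}\cup A\cup B\cup Y\cup Z$, and every one of these locations for $p$ contradicts one of $d(j,u)=3$, $d(j,v)=3$, $j\notin N(Y)$, $j\notin N(Z)$.

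Once $K$ and $J$ are ruled out, the conditions $x\notin X_1$ (no neighbour in $Y\cup Z\cup I\cup W$) and $x\notin X_2$ (isolated in $G[X\setminus X_1]$) leave $N(x)\subseteq A\cup X_1$. Finally, $x\in X_4$ means $x\notin X_3$, so $N(x)$ is not contained in $A$, whence $x$ has a neighbour in $X_1$ and $N(x)\cap X_1\neq\emptyset$. The same localisation-plus-exclusion scheme, with the sets interchanged, settles $(2)$ and $(4)$, so the whole difficulty is concentrated in the distance argument that separates $X$ (and symmetrically $Y$) from $J$.
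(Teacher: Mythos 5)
Your proposal is correct, and it supplies considerably more than the paper does: the paper states this observation with only the one-line justification ``holds for the above vertex set partition since $G$ is bridgeless'' and gives no proof. Your localisation arguments are sound. For $(1)$: every neighbour of $x\in A$ lies in $N_2[u]\setminus\{v\}=\{u\}\cup A\cup B\cup X\cup Z$ (adjacency to $v$ would create a triangle on $uv$), the conditions $x\notin A_1\cup A_2$ strip this down to $\{u\}\cup A_1$, and bridgelessness rules out $N(x)=\{u\}$. For $(3)$: your exclusion of $K$ via the priority ordering $I$ before $K$ is exactly right, and you correctly identify the one genuinely nontrivial step, namely $E[X,J]=\emptyset$. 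Your contradiction argument there works: an $X$-neighbour forces $j\notin N(Y)\cup N(Z)$ (else $j\in W$ or $j\in I$), while any path of length $3$ from $j$ to $v$ must pass through $N(B)\setminus(\{v\}\cup A\cup B)\subseteq Y\cup Z$, which is impossible. (This is the same fact the paper implicitly relies on when it asserts that every vertex outside $D$ has a neighbour in each of $X\cup Z$ and $Y\cup Z$.) One point worth noting, which your write-up makes visible and the paper's one-liner obscures: bridgelessness is genuinely needed only for $(1)$ and $(2)$; parts $(3)$ and $(4)$ follow purely from the definitions, since $x\in X_4$ means $x\notin X_3$, i.e.\ $N(x)\not\subseteq A$, which together with $N(x)\subseteq A\cup X_1$ already yields $N(x)\cap X_1\neq\emptyset$ without any appeal to the absence of bridges.
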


We give a partial $8$-edge-coloring of $G$ as follows:

$$c(e)=\left\{\begin{array}{ll}
1, & if\ e=uv;\\
2, & if\ e\in E[u,A_1]\cup E[v,B_3];\\
3, & if\ e\in E[u,A_3]\cup E[v,B_1];\\
4, & if\ e\in E[A_1,X_1\cup Z]\cup E[B_1,Y_4]\cup E(G[A_1]);\\
5, & if\ e\in E[A_1,X_4]\cup E[B_1,Y_1\cup Z]\cup E(G[B_1]);\\
6, & if\ e\in E[A_1,B_1]\cup E[Z,K]\cup E[X_1,I\cup Z\cup W\cup Y_1];\\
7, & if\ e\in E[Z,I]\cup E[Y_1,K\cup Z\cup W];\\
8, & if\ e\in E[A_1,A_3]\cup E[B_1,B_3]\cup E[X_1,X_4]\cup
E[Y_1,Y_4]\cup E[J,I\cup K\cup W];\\
9, & if\ e\in E[A_1,X_4]\cup E[B_1,Y_4].
 \end{array}\right.$$

\noindent See Figure~1 for details. For any $x\in X_3$, if $N(x)\cap
N(B_1)\neq \emptyset$, color $xx'$ by color $7$, where $x'\in
N(x)\cap N(B_1)$, color the others by color $9$. Otherwise $N(x)\cap
N(B_1)= \emptyset$, since $diam(G)=3$, there exists a neighbor $x'$
of $x$ such that $N(x')\cap N(B_1)\neq \emptyset$. Note that $x'\in
A_1$. Then we color $xx'$ by color $7$, color the others by color
$9$. Similarly, we color the edges in $E[B_1,Y_3]$ by colors $7$ and
$9$.

\begin{observation}
$(1)$ For any $x\in X_3$, there is a $\{3,5,6,7,8\}$ or
$\{3,5,6,7,9\}$-rainbow path between $x$ and $v$ under the above
edge-coloring. $(2)$ For any $y\in Y_3$, there is a $\{2,5,6,7,8\}$
or $\{2,5,6,7,9\}$-rainbow path between $y$ and $u$ under the above
edge-coloring.
\end{observation}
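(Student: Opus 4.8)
The plan is to build, for each $x\in X_3$ (and symmetrically each $y\in Y_3$), an explicit rainbow path to $v$ (to $u$) out of the fixed color classes together with the special color-$7$/color-$9$ assignment made above on the edges incident to $X_3$ and $Y_3$. I will carry out part $(1)$ in detail and deduce part $(2)$ from the symmetry $u\leftrightarrow v$, $A\leftrightarrow B$, $X\leftrightarrow Y$, $A_1\leftrightarrow B_1$ of the construction, which carries the admissible palette $\{3,5,6,7,8\}$ (resp. $\{3,5,6,7,9\}$) to $\{2,5,6,7,8\}$ (resp. $\{2,5,6,7,9\}$).

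Fix $x\in X_3$. By the definition of $X_3$ every neighbor of $x$ lies in $A$, and since each such neighbor is adjacent to the vertex $x\in X$ it must in fact lie in $A_1$; hence $N(x)\subseteq A_1$. The argument then follows the two cases that govern the special coloring of the edges at $x$.

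If $N(x)\cap N(B_1)\neq\emptyset$, then the edge $xx'$ with $x'\in N(x)\cap N(B_1)$ received color $7$, the vertex $x'$ lies in $A_1$, and it has a neighbor $b\in B_1$. Reading off the color classes, the path $x,x',b,v$ carries color $7$ on $xx'$, color $6$ on $x'b\in E[A_1,B_1]$ and color $3$ on $bv\in E[v,B_1]$; it is therefore a $\{3,6,7\}$-rainbow path, contained in both admissible palettes. The mirror construction produces a $\{2,6,7\}$-rainbow path $y,y',a,u$ for $y\in Y_3$.

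I expect the remaining case $N(x)\cap N(B_1)=\emptyset$ to be the main obstacle. Here no neighbor of $x$ is adjacent to $B_1$, so there is no length-$3$ path $x,a,b,v$ ending in $B_1$, and the only short passage through the core would traverse the forbidden edge $uv$ (color $1$). Instead I would use the distinguished neighbor $x'\in A_1$ with $N(x')\cap N(B_1)\neq\emptyset$ (with $xx'$ colored $7$), choose $x''\in N(x')\cap N(B_1)$ and $b\in B_1\cap N(x'')$, and form the path $x,x',x'',b,v$. The crux is to control the colors of the two new edges $x'x''$ and $x''b$: since $x''$ is a common neighbor of $A_1$ and $B_1$ it lies in $Z$, in $A_1$ or in $B_1$, and because $x'$ is a neighbor of $x$ and hence, in this case, not adjacent to $B_1$, the subcase $x''\in B_1$ cannot occur, so $x'x''$ is either an $A_1$--$Z$ edge or an edge inside $A_1$, both colored $4$. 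Reconciling the appearance of color~$4$ with the stated palettes $\{3,5,6,7,8\}$ and $\{3,5,6,7,9\}$ is precisely the delicate point I would scrutinize, making heavy use of the defining properties of $A_1,B_1,Z$ and of the precise choice of the specially colored edge; the symmetric analysis would then settle the $Y_3$ part of $(2)$.
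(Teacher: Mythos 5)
Your construction follows the paper's proof of this observation essentially line for line: the same case split on whether the distinguished color-$7$ edge $xx'$ leads to a vertex $x'$ with a neighbor in $B_1$, the same path $x,x',b,v$ in the first case (colors $7,6,3$, which indeed sits inside both palettes), and the same path $x,x',x'',b,v$ in the second case (the paper writes it as $x,x',z,y,v$ with $z\in N(x')\cap N(y)$). Your first case is complete and correct, and your symmetry reduction for part $(2)$ is exactly what the paper does.

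The point you flag in the second case as ``the delicate point I would scrutinize'' is a genuine gap, and you should know that the paper does not resolve it either: it simply asserts that $x,x',z,y,v$ is a $\{3,5,6,7,8\}$- or $\{3,5,6,7,9\}$-rainbow path. As you correctly computed, since $x'$ has no neighbor in $B_1$ in this case, the intermediate vertex $z$ lies in $A_1\cup Z$, so the edge $x'z$ belongs to $E(G[A_1])\cup E[A_1,Z]$ and is colored $4$; the resulting path has color set $\{3,4,6,7\}$ or $\{3,4,5,7\}$. It is a rainbow path, but it is \emph{not} contained in either of the palettes claimed in the statement, so the observation as literally phrased is not established by this construction. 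Your proposal is therefore incomplete, but the missing step is not one you overlooked --- it is a defect in the source argument: either the palettes in the statement must be enlarged to admit color $4$ (with the knock-on effect of rechecking every place the observation is invoked for concatenation with other path segments), or the coloring of $E(G[A_1])$ and $E[A_1,Z]$ must be adjusted. You were right not to paper over the appearance of color $4$.
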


\begin{proof}
Since the proofs are similar, we only show $(1)$. Let $xx'$ be the
edge that is colored by color $7$ in the above coloring process.
Note that $x'\in A_1$.

If $x'$ has a neighbor, say $y$, in $B_1$, then  $x,x',y,v$ is a
$\{3,5,6,7,8\}$ or $\{3,5,6,7,9\}$-rainbow path between $x$ and $v$
under the above edge-coloring.

Otherwise, $x'$ has no neighbor in $B_1$. Then we know that
$N(x)\cap N(B_1)=\emptyset$ and $N(x')\cap N(B_1)\neq\emptyset$ by
the above coloring process. Pick $y\in B_1$ such that $N(x')\cap
N(y)\neq\emptyset$. Furthermore, pick $z\in N(x')\cap N(y)$. Then we
have $z\in A_1\cup B_1\cup Z$. Clearly, $x,x',z,y,v$ is a
$\{3,5,6,7,8\}$ or $\{3,5,6,7,9\}$-rainbow path between $x$ and $v$
under the above edge-coloring.
\end{proof}

To complete our edge-coloring, we further partition $J$ as follows:
\begin{align*}
&J_0=\{x\in J\ |\ x\ is\ not\ an\ isolated\ vertex\ in\ G[J]\},\\
&J_1=\{x\in J\setminus J_0\ |\ x\ has\ at\ least\ a\ neighbor\ in\ K\},\\
&J_2=\{x\in J\setminus (J_0\cup J_1)\ |\ x\ has\ at\ least\ a\ neighbor\ in\ I\},\\
&J_3=\{x\in J\setminus (J_0\cup J_1\cup J_2)\ |\ x\ has\ at\ least\ a\ neighbor\ in\ K\}\\
&J_4=J\setminus (J_0\cup J_1\cup J_2\cup J_3).
\end{align*}

Now we further color the edges of $G$ as follows: color the edges in
$E[Z,J_1\cup J_2\cup J_3]$ by color $7$; for any $x\in J_4$, color
one in $E[x,Z]$ by $7$, color the others in $E[x,Z]$ by $9$ (there
exists at least one such edge since $G$ is bridgeless).

To color the remaining edges, we need the following lemma.
\begin{lemma}
Let $S$ and $T$ be disjoint vertex sets of a graph $G$ such that
$S\subseteq N(T)$. If the induced subgraph $G[S]$ has no trivial
components, then there is an $\{\alpha,\beta,\gamma\}$-edge-coloring
of $G[S]\cup E[S, T]$, where the edges in $E[S, T]$ are colored by
colors $\alpha$ and $\beta$, and the edges in $G[S]$ are colored by
color $\gamma$, such that there exist two rainbow paths $P_1$ and
$P_2$ between $s$ and $T$ for every $s\in S$. Furthermore, if $P_1$
admits color $\{\alpha\}$, then $P_2$ admits colors
$\{\beta,\gamma\}$; if $P_1$ admits color $\{\beta\}$, then $P_2$
admits colors $\{\alpha,\gamma\}$.
\end{lemma}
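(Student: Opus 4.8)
The plan is to imitate the spanning-forest bipartition argument already used in the proof of Theorem~2. Since $G[S]$ has no trivial components, every vertex of $S$ lies in a component of $G[S]$ that contains at least one edge, so $G[S]$ admits a spanning forest $F$ with no isolated vertex. I would then take the bipartition $(X,Y)$ of $S$ obtained from the proper $2$-coloring of $F$, each tree of $F$ being bipartite with its two color classes chosen independently. The crucial feature of this bipartition is that, because no tree of $F$ is a single vertex, every $s\in S$ has a neighbor in $F$ lying in the opposite class; that edge is exactly what I will use to build the second path.

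Next I would define the coloring by $c(e)=\alpha$ for every $e\in E[X,T]$, $c(e)=\beta$ for every $e\in E[Y,T]$, and $c(e)=\gamma$ for every $e\in E(G[S])$. This respects the stated palette restrictions: the crossing edges $E[S,T]$ receive only $\alpha$ and $\beta$ (since $X\cup Y=S$), while every internal edge of $G[S]$ receives $\gamma$. It then remains to exhibit, for each $s\in S$, the two required rainbow paths to $T$.

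For $s\in X$, the hypothesis $S\subseteq N(T)$ supplies a neighbor $t\in T$ of $s$, and the single edge $s,t$ is a $\{\alpha\}$-rainbow path $P_1$. For $P_2$, let $w\in Y$ be the neighbor of $s$ in $F$; then $w$ likewise has a neighbor $t'\in T$, and $s,w,t'$ is a path whose edges are colored $\gamma$ and $\beta$, hence a $\{\beta,\gamma\}$-rainbow path. Since $P_1$ and $P_2$ have lengths $1$ and $2$ they are distinct. For $s\in Y$ the construction is the mirror image, yielding a $\{\beta\}$-rainbow path $P_1$ and an $\{\alpha,\gamma\}$-rainbow path $P_2$. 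This matches the ``furthermore'' clause verbatim, because the pairing of palettes $(\{\alpha\},\{\beta,\gamma\})$ versus $(\{\beta\},\{\alpha,\gamma\})$ is forced by which class of the bipartition contains $s$.

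I do not expect a genuine obstacle here: each $P_2$ carries only two edges of distinct colors and so is automatically rainbow, while $P_1$ is a single edge and hence trivially rainbow. The one point requiring care is the existence, for every $s$, of the opposite-class forest neighbor $w$, which is precisely what the ``no trivial components'' hypothesis guarantees; were some component of $G[S]$ a lone vertex $s$, no internal ($\gamma$-colored) edge would be incident to $s$ and the second path could not be formed.
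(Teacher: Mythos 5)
Your proposal is correct and follows essentially the same route as the paper: a spanning forest of $G[S]$ without isolated vertices, its bipartition $(X,Y)$, the coloring $\alpha$ on $E[X,T]$, $\beta$ on $E[Y,T]$, $\gamma$ on $E(G[S])$, and the two paths $s,t$ and $s,w,t'$. You merely spell out the verification that the paper compresses into ``Clearly, \dots'', including the (correct) observation that the no-trivial-components hypothesis is what guarantees the opposite-class forest neighbor $w$.
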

\begin{proof}
Let $F$ be a spanning forest of $G[S]$, and let $X$ and $Y$ be any
one of the bipartition defined by this forest $F$. We give a
$3$-edge-coloring $c: E(G[S])\cup E[S,T]\rightarrow
\{\alpha,\beta,\gamma\}$ of $G$ by defining
$$c(e)=\left\{\begin{array}{ll} \alpha, & if\ e\in E[T,X];\\
\beta, & if\ e\in E[T,Y];\\ \gamma, & otherwise.
 \end{array}\right.$$
Clearly, for the above edge-coloring, there exist two rainbow paths
$P_1$ and $P_2$ between $s$ and $T$ for every $s\in S$. Furthermore,
if $P_1$ admits color $\{\alpha\}$, then $P_2$ admits colors
$\{\beta,\gamma\}$; if $P_2$ admits color $\{\beta\}$, then $P_2$
admits colors $\{\alpha,\gamma\}$.
\end{proof}

\begin{remark} In the next section, we shall use Lemma~1 repeatedly.
When we use this lemma, by ``forest'' we mean the forest defined in
Lemma~1 and by ``bipartition'' we mean the bipartition defined in
Lemma~1.
\end{remark}

We can complete our edge-coloring by Lemma~1. The edge-coloring in
Lemma~1 is called a $c_{\alpha,\beta}^{\gamma}$-edge-coloring of
$G[S]\cup E[S, T]$. Clearly, we can give $G[A_2]\cup E[A_2,u]$ and
$G[B_2]\cup E[B_2,v]$ a $c^8_{2,3}$-edge-coloring, $G[X_2]\cup
E[X_2,A_1]$ and $G[Y_2]\cup E[Y_2,B_1]$ a $c^7_{8,9}$-edge-coloring,
and $G[J_{4,2}]\cup E[J_{4,2},Z]$ a $c_{7,9}^8$-edge-coloring,
respectively. For the remaining edges, we do not color them at this
moment.

\subsection{Our upper bound}

In this subsection, we show that every bridgeless graph with
diameter~$3$ has a rainbow coloring with at most $9$ colors.

At first, we need a new notation. Let $X_1, X_2,\ldots\, X_k$ be
disjoint vertex subsets of $G$. The notation $X_1,X_2,\cdots,X_k$
means that there exists some desired rainbow path
$P=(x_1,x_2,\ldots,x_k)$, where $x_i\in X_i,\ i=1,2,\ldots,k$.
\begin{observation}
$(1)$ For every $x\in X_1$, there exists a $\{3,5,6,7\}$-rainbow
path between $x$ and $v$ under the above edge-coloring. $(2)$ For
every $x\in Y_1$, there exists a $\{2,4,6,7\}$-rainbow path between
$x$ and $u$ under the above edge-coloring.
\end{observation}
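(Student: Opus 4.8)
The plan is to prove part (1) directly and then obtain part (2) by the symmetric argument, interchanging the roles of $u,v$; of $A,B$; of $X,Y$; and of the relevant color pairs. By definition, $x\in X_1$ has at least one neighbor in $Y\cup Z\cup I\cup W$, so I would split the proof into four cases according to which of these four sets contains a neighbor of $x$. Before the cases, I would record one structural fact that is used repeatedly: every vertex of $Z$, and every vertex of $Y$, has a neighbor lying in $B_1$. Indeed, a neighbor $b\in B$ of such a vertex has, by assumption, a neighbor in $Z$ (respectively in $Y$), and hence satisfies the defining condition of $B_1$. Since $E[v,B_1]$ has color $3$ and $E[B_1,Y_1\cup Z]$ has color $5$, any path that reaches a vertex of $Z$ or $Y_1$ can be closed off to $v$ by appending the two edges of colors $5$ and $3$.

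With this in hand the four cases are routine. If $x$ has a neighbor $z\in Z$, take $x,z,b,v$ with colors $6,5,3$; if $x$ has a neighbor $y\in Y$ (which then lies in $Y_1$, since $y$ has a neighbor in $X$), take $x,y,b,v$ with colors $6,5,3$; if $x$ has a neighbor $w\in W$, follow $w$ to a vertex $y\in Y$, which lies in $Y_1$ because it has a neighbor in $W$, and take $x,w,y,b,v$ with colors $6,7,5,3$; finally, if $x$ has a neighbor $i\in I$, follow $i$ to a vertex $z\in Z$ (one exists by the definition of $I$) and take $x,i,z,b,v$ with colors $6,7,5,3$. The first edge out of $x$ always receives color $6$, because the partial coloring assigns color $6$ to the entire block $E[X_1,I\cup Z\cup W\cup Y_1]$; the long-range connectors $E[Y_1,W]$ and $E[Z,I]$ receive color $7$; and the terminal two edges receive colors $5$ and $3$ by the structural fact above. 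In every case the colors are pairwise distinct and all lie in $\{3,5,6,7\}$, while the five vertices of the longest paths are distinct because they lie in pairwise disjoint parts of the partition, so each path is the desired rainbow path.

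I expect the main difficulty to be bookkeeping rather than any single idea: one must keep straight exactly which edges the partial coloring assigns a color to, since the color table is dense and some blocks appear on more than one line, and one must confirm that each intermediate vertex really lands in the subset whose incident edges carry the color being claimed. Fixing the structural lemma (that a neighbor in $B$ of any vertex of $Z\cup Y$ lies in $B_1$, and symmetrically for $A$) at the very start removes the ambiguity in coloring the terminal edges and keeps the case analysis mechanical. Part (2) is then the mirror image: one now reaches $u$ through $A_1$ via the edges $E[u,A_1]$ of color $2$ and $E[A_1,X_1\cup Z]$ of color $4$, and the neighbor structure defining $Y_1$ (a neighbor in $X\cup Z\cup K\cup W$) produces the four analogous paths, with all colors drawn from $\{2,4,6,7\}$.
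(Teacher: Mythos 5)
Your proposal is correct and follows essentially the same route as the paper: the same four-case split on whether the neighbor of $x$ lies in $Y$, $Z$, $I$, or $W$, producing the same paths $x,x',b,v$ or $x,x',z',b,v$ closed off through $B_1$ and $v$, with part (2) by symmetry. The only difference is cosmetic — you isolate as an explicit preliminary fact that every vertex of $Z\cup Y$ has a neighbor in $B_1$, which the paper uses implicitly when it writes ``where $y$ is a neighbor of $x'$ in $B_1$.''
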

\begin{proof}
Since the proofs are similar, we only show $(1)$. For any $x\in
X_1$, by the definition of set $X_1$, we know that $x$ has neighbors
in $Y\cup Z\cup I\cup W$. Let $x'$ be a neighbor of $x$ in $Y\cup
Z\cup I\cup W$.

If $x'\in Y$, then $x'\in Y_1$ by the definition of set $Y_1$. Thus
$x,x',y,v$ is a $\{3,5,6,7\}$-rainbow path between $x$ and $v$ under
the above edge-coloring, where $y$ is a neighbor of $x'$ in $B_1$.

If $x'\in Z$, then $x,x',y,v$ is a $\{3,5,6,7\}$-rainbow path
between $x$ and $v$ under the above edge-coloring, where $y$ is a
neighbor of $x'$ in $B_1$.

If $x'\in I$, then $x,x',z,z',v$ is a $\{3,5,6,7\}$-rainbow path
between $x$ and $v$ under the above edge-coloring, where $z$ is a
neighbor of $x'$ in $Z$ and $z'$ is a neighbor of $z$ in $B_1$.

Otherwise, $x'\in W$, and then $x,x',y,y',v$ is a
$\{3,5,6,7\}$-rainbow path between $x$ and $v$ under the above
edge-coloring, where $y$ is a neighbor of $x'$ in $Y_1$ and $y'$ is
a neighbor of $y$ in $B_1$.
\end{proof}
\begin{observation}
$(1)$ For every $x\in A_1$, there exists a $\{3,4,5,6,7\}$-rainbow
path between $x$ and $v$ under the above edge-coloring. $(2)$ For
every $x\in B_1$, there exists a $\{2,4,5,6,7\}$-rainbow path
between $x$ and $u$ under the above edge-coloring.
\end{observation}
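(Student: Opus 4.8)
By the symmetry between the triples $(u,A,X)$ and $(v,B,Y)$ — which interchanges colors $2\leftrightarrow 3$, $4\leftrightarrow 5$, $6\leftrightarrow 7$ in every definition and every coloring rule above — statement $(2)$ follows from $(1)$ by a mechanical relabeling, so the plan is to prove $(1)$ only. Fix $x\in A_1$. The design goal is to build a rainbow path that arrives at $v$ along a color-$3$ edge, i.e.\ through some vertex of $B_1$ (recall $E[B_1,v]$ is colored $3$); the only colors I am allowed to spend before that step are $4,5,6,7$. Since $x\in A_1$, by definition $x$ has at least one neighbor in $B\cup X\cup Z$, and because I only need a single path I may choose this neighbor to suit me and split into cases on where it lies.

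The three \emph{direct} cases are the core of the argument. If $x$ has a neighbor $b\in B$, then $b$ has the neighbor $x\in A\subseteq A\cup Y\cup Z$, so $b\in B_1$; the path $x,b,v$ uses $E[A_1,B_1]$ (color $6$) and $E[B_1,v]$ (color $3$), hence is a $\{3,6\}$-rainbow path. If $x$ has a neighbor $z\in Z$, then $z\in N(B)$ gives some $b\in B$ with $z\sim b$, and $b\in B_1$; the path $x,z,b,v$ uses colors $4$ (on $E[A_1,Z]$), $5$ (on $E[Z,B_1]$), $3$ (on $E[B_1,v]$), a $\{3,4,5\}$-rainbow path. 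Finally, if $x$ has a neighbor $x_1\in X_1$, then Observation~3$(1)$ supplies a $\{3,5,6,7\}$-rainbow path from $x_1$ to $v$; since $E[A_1,X_1]$ is colored $4$, prefixing the edge $xx_1$ produces a $\{3,4,5,6,7\}$-rainbow path. In each of these three cases the color $4$ used on the first edge is disjoint from the colors of the remainder, so rainbowness is immediate.

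The remaining case, and the step I expect to be the main obstacle, is when every neighbor of $x$ in $B\cup X\cup Z$ lies in $X_2\cup X_3\cup X_4$. Here I must navigate the internal structure of $X$ to reach a vertex of $X_1$ (and then finish as above) \emph{without} picking up the ``back-edge'' colors $8$ and $9$ and without reusing color $4$. The tools available are exactly those built for this purpose: Observation~1$(3)$, which guarantees every $X_4$-vertex a neighbor in $X_1$; Observation~2$(1)$, which routes every $X_3$-vertex to $v$; the internal color $7$ on $G[X_2]$ together with the $c^{7}_{8,9}$-coloring of $G[X_2]\cup E[X_2,A_1]$ from Lemma~1, whose two guaranteed rainbow paths give the freedom to dodge one repeated color. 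The delicate part is a careful sub-case check verifying that, for each location of $x_1$, one can assemble these pieces into a single rainbow path confined to $\{3,4,5,6,7\}$; this is where the precise choices of which incident edge receives color $7$ versus color $9$ (in the $X_3$ colouring rule) must be matched against the path produced, and where bridgelessness is used to ensure the needed connecting vertices exist.
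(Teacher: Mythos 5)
Your three ``direct'' cases (neighbor in $B$, in $Z$, in $X_1$) are exactly the paper's proof of this observation: the paper also exhibits $x,x',v$ with colors $\{3,6\}$, $x,x',y,v$ with colors $\{3,4,5\}$, and the concatenation of $xx'$ (color $4$) with the $\{3,5,6,7\}$-path of Observation~3 when $x'\in X_1$. The symmetry reduction to part $(1)$ is also how the paper proceeds. Where you diverge is instructive: the paper simply asserts that ``by the definition of $A_1$, $x$ has neighbors in $B_1\cup Z\cup X_1$'' and stops after the three cases, whereas the definition of $A_1$ only guarantees a neighbor in $B\cup X\cup Z$, and a vertex of $X$ adjacent to $A_1$ need not lie in $X_1$ (membership in $X_1$ requires a neighbor in $Y\cup Z\cup I\cup W$, which adjacency to $A$ does not supply). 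So the fourth case you isolate is real and is silently skipped by the paper.

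The problem is that your proposal does not actually close that fourth case; you only inventory tools and predict a ``delicate sub-case check.'' Worse, the tools you name cannot deliver a path confined to $\{3,4,5,6,7\}$: every exit from $X_2\cup X_3\cup X_4$ toward $v$ is forced through a color-$8$ or color-$9$ edge ($E[X_1,X_4]$ is colored $8$, the designated $X_3$-escape edges are colored $7$ or $9$, Observation~2 only produces $\{3,5,6,7,8\}$- or $\{3,5,6,7,9\}$-rainbow paths, and the $c^7_{8,9}$-coloring of $E[X_2,A_1]$ uses $8$ and $9$ by construction), and a detour inside $G[A_1]$ spends color $4$, which then collides with the color-$4$ edges into $X_1\cup Z$. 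So as written the plan for the residual case would fail for the specific color set claimed; to finish one must either show the case is vacuous or prove a weaker color guarantee for such $x$ and check it still suffices wherever Observation~4 is invoked. Since the paper itself leaves this same hole, your attempt is a faithful (and more honest) reconstruction of an incomplete proof, but it is not yet a proof.
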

\begin{proof}
Since the proofs are similar, we only show $(1)$. For any $x\in
A_1$, by the definition of set $A_1$, we know that $x$ has neighbors
in $B_1 \cup Z \cup X_1$. Let $x'$ be a neighbor of $x$ in $Y\cup
Z\cup I\cup W$.

If $x'\in B_1$, then  $x,x',v$ is a $\{3,4,5,6,7\}$-rainbow path
between $x$ and $v$ under the above edge-coloring.

If $x'\in Z$, then $x,x',y,v$ is a $\{3,4,5,6,7\}$-rainbow path
between $x$ and $v$ under the above edge-coloring, where $y$ is a
neighbor of $x'$ in $B_1$.

Otherwise, $x'\in X_1$. By Observation~3, there exists a
$\{3,5,6,7\}$-rainbow path $P_{x',v}$. Thus $\{3,4,5,6,7\}$-rainbow
path between $x'$ and $v$ is obtained from $P_{x',v}$ and $xx'$.
\end{proof}

Before showing the following two lemmas, we need color some edges
that are not colored in the above coloring process: The edges of
$G[X_2]$ and $G[Y_2]$ that are not colored by color $7$ are colored
by color $4$.
\begin{lemma}
There exists a rainbow path between any two vertices of $X_2$ under
the above edge-coloring.
\end{lemma}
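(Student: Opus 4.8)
The plan is to exploit the two-path property furnished by Lemma~1 for the $c^7_{8,9}$-edge-coloring of $G[X_2]\cup E[X_2,A_1]$. First I would record that Lemma~1 does apply here: since every neighbour of an $X_2$-vertex that lies in $X\setminus X_1$ must itself lie in $X_2$ (the vertices of $X_3\cup X_4$ are exactly the isolated vertices of $G[X\setminus X_1]$), the subgraph $G[X_2]$ has no isolated vertex, and $X\subseteq N(A_1)$ since any $A$-neighbour of an $X$-vertex lies in $A_1$. Hence Lemma~1 gives, for each $x\in X_2$, a reach to $A_1$ either \emph{directly} along a single edge of colour $8$ or $9$ (according to the bipartition class of $x$ in the forest of $G[X_2]$), or \emph{indirectly} along a path $x,\hat x,a$ of colours $\{7,9\}$ or $\{7,8\}$, where $\hat x\in X_2$ is a forest-neighbour of $x$.

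I would then reduce the lemma to a purely local statement about $A_1$. Given $x,x'\in X_2$ and endpoints $a,a'\in A_1$ of chosen paths, the concatenation of the two halves with a rainbow $a$--$a'$ path $Q$ will be rainbow as soon as the colour sets of the halves and of $Q$ are disjoint. If $x,x'$ lie in different forest-classes, take both direct paths, which together spend only $\{8,9\}$, so $Q$ may use any colours in $\{1,\dots,7\}$. If they lie in the same class, keep one direct (colour $8$, say) and make the other indirect (colours $\{7,9\}$), so the halves spend $\{7,8,9\}$ and $Q$ must use colours in $\{1,\dots,6\}$ only. The degenerate situations $a=a'$ and ``$a,a'$ adjacent'' can be dispatched immediately, since the $A_1$-edge has colour $4$.

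For the different-class case I expect a clean, uniform connector. Using Observation~4(1), $a'$ has a $\{3,4,5,6,7\}$-rainbow path to $v$; prepending $a\xrightarrow{2}u\xrightarrow{1}v$ to its reverse gives
\[
Q:\quad a\xrightarrow{2}u\xrightarrow{1}v\ \longrightarrow\ a',
\]
whose colours lie in $\{1,2\}\cup\{3,4,5,6,7\}$ and which is rainbow because $1,2$ are new. Together with the halves' $\{8,9\}$ this is rainbow. For the same-class case, where $Q$ must avoid colour $7$, I would instead route through $B_1$ or $Z$: when $a'$ has a neighbour $b\in B_1$, take $a\xrightarrow{2}u\xrightarrow{1}v\xrightarrow{3}b\xrightarrow{6}a'$ with colours $\{1,2,3,6\}$; when $a'$ has no $B_1$-neighbour but a neighbour $z\in Z$, lengthen the tail to $v\xrightarrow{3}b\xrightarrow{5}z\xrightarrow{4}a'$, where $b\in B_1\cap N(z)$ exists because $Z\subseteq N(B_1)$, giving colours $\{1,2,3,4,5\}$. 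These routes are symmetric in $a$ and $a'$, so it suffices that at least one endpoint attaches to $B_1\cup Z$.

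The main obstacle, and the step I expect to demand the most care, is the same-class case in which \emph{both} endpoints of $A_1$ attach to the rest of $G$ only through $X$ (neither has a neighbour in $B_1\cup Z$), since then the natural route to $v$ passes through $X_1$ and, via Observation~3, may be forced to spend colour $7$ — precisely the colour the indirect half has already used. Here I would locate, through the definition of $A_1$, a neighbour in $X_1\cup X_2\cup X_4$ and ride Observation~3 (for $X_1$) or Observation~1(3) together with the $c^7_{8,9}$-structure (for $X_4$, via its guaranteed $X_1$-neighbour) back towards $v$, and then argue that bridgelessness and $diam(G)=3$ always supply a variant of this route that avoids colour $7$, for instance by assigning the indirect half to whichever of $x,x'$ ends at the endpoint that does attach to $B_1\cup Z$. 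Assembling these sub-cases, and checking in each that the resulting walk is a genuine (simple) rainbow path, yields a rainbow path between any two vertices of $X_2$.
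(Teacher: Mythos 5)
Your set-up is sound (Lemma~1 does apply to $G[X_2]\cup E[X_2,A_1]$, and your different-class case coincides with the paper's Case~1: direct edges of colours $8$ and $9$ at both ends, connector $a,u,v$ followed by the reversed Observation~4 path), but there is a genuine gap in the same-class case, and it sits exactly where you say you ``expect the most care'': when neither chosen endpoint $a,a'\in A_1$ has a neighbour in $B_1\cup Z$. Your scheme commits one half to the indirect $\{7,9\}$-path, so the connector must avoid colour $7$ as well as $8$ and $9$; after spending $1,2$ on $a,u,v$ it must reach $a'$ using only $\{3,4,5,6\}$. The only ways into such an $a'$ are through another $A_1$-vertex, through $X_1$ (colour $4$), or through $X_4$, and the guaranteed $X_1$--$v$ connection of Observation~3 is only $\{3,5,6,7\}$-rainbow: it is forced to use colour $7$ precisely when the relevant $X_1$-vertex meets the rest of the graph only through $I\cup W$ (the route $X_1,I,Z,B_1,v$ or $X_1,W,Y_1,B_1,v$ spends $7$ on its second edge). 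Your proposed repair --- give the indirect half to whichever of $x,y$ attaches to $B_1\cup Z$ --- does not apply here by hypothesis, and ``bridgelessness and $diam(G)=3$ always supply a $7$-avoiding variant'' is asserted, not proved; it is the entire content of the hard case.

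For comparison, the paper closes this case by abandoning the ``both ends go to $A_1$ first'' reduction: it classifies by $d(x,B_1)\in\{2,3\}$, follows an actual shortest path $x,x',z,w$ with $w\in B_1$ (splitting on $x'\in A_1$, $X_1$, or $X_2$), pays colour $7$ on the edge $xx'$ itself when $x'\in X_1$, and reserves the Lemma~1/forest machinery (colours $8,9$, or $\{7,9\}$) entirely for the other endpoint $y$. If you want to keep your architecture, you would need to prove the missing claim --- that some $A_1$- or $X_1$-detour into $a'$ avoiding colour $7$ always exists --- and it is not clear that it does; routing $x$'s half along its shortest path to $B_1$, as the paper does, is the cleaner fix.
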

\begin{proof}
Let $x$ and $y$ be any two vertices in $X_2$. We consider the
following two cases.

{\itshape Case~1.} $x$ and $y$ are contained in different parts in
the bipartite of Lemma~1.

Let $x'$ and $y'$ be neighbors of $x$ and $y$ in $A_1$,
respectively, such that $xx'$ and $yy'$ are colored by $8$ and $9$,
respectively. By Observation~4, there exists a
$\{3,4,5,6,7\}$-rainbow path $P_{y',v}$. Thus, a rainbow path
between $x$ and $y$ is obtained from $P_{y',v}$ and $x,x',u,v$.

{\itshape Case~2.} $x$ and $y$ are contained in the same part. If
$d(x,B_1)=2$ or $d(y,B_1)=2$, without loss of generality, say
$d(x,B_1)=2$. Assume that $x'\in N(x)\cap N(B_1)$ and $xx'$ is
colored by color $8$. By the above set partition, we know $x'\in
A_1$. So, $x,x',z,v,A_1$ is $\{1,2,3,6,8\}$-rainbow, where $z$ is a
neighbor of $x'$ in $B_1$. By Lemma~1, $y$ and $A_1$ are connected
by a $\{7,9\}$-rainbow path. Thus, $x$ and $y$ are connected by a
rainbow path.

Suppose $d(x,B_1)=d(y,B_1)=3$. Let $x,x',z,w$ be a path between $x$
and $w$, where $w\in B_1$. By the above set partition, $x'\in
A_1\cup X_1\cup X_4$.

If $x'\in A_1$, then $z$ must be contained in $A_1\cup B_1\cup Z$.
Clearly $x',z,w$ is a $\{4,5,6\}$-rainbow path. Assume that $xx'$ is
colored by color $8$. Then $x,x',z,w,v,u,A_1$ is
$\{1,2,3,4,5,6,8\}$-rainbow. By Lemma~1, there exists a
$\{7,9\}$-rainbow path between $y$ and $A_1$. So, $x$ and $y$ are
connected by a rainbow path.

If $x'\in X_1$, then $z$ must be contained in $A_1\cup Z$. Clearly
$x',z,w$ is a $\{4,5,6\}$-rainbow path. Note that $xx'$ is colored
by color $7$. Thus $x,x',z,w,v,u,y',y$ is rainbow path between $x$
and $y$, where $y'$ is a neighbor of $y$ in $A_1$.

If $x'\in X_2$, then $z$ must be contained in $A_1$. If $xx'$ is
colored by color $4$. Assume that $x'z$ is colored by $8$. Then
$x,x',z,w,v,u,A_1$ is $\{1,2,3,4,6,8\}$-rainbow. By Lemma~1, there
exists a $\{7,9\}$-rainbow path between $y$ and $A_1$, without loss
of generality, denote it by $P_{y,y'}$. Thus, a rainbow path between
$x$ and $y$ is obtained from $x,x',z,w,v,u,y'$ and $P_{y',y}$. If
$xx'$ is colored by color $7$, then $xx'$ is contained in the
spanning forest in Lemma~1. Assume that $y'$ is a neighbor of $y$ in
$A_1$. Since $x'$ and $y$ are contained in different parts, $x'z$
and $yy'$ are colored by different colors in $\{8,9\}$. Thus,
$x,x',z,w,v,u,y',y$ is $\{1,2,3,6,7,8,9\}$-rainbow path between $x$
and $y$.
\end{proof}

Now, we color the edges in $E[Y_1,Y_2]$ as follows: For any $y\in
Y_2$, we color the edges in $E[y,Y_1]$ (if exists) by $9$ if the
edges in $E[y,B_1]$ are colored by $8$; we color the edges in
$E[y,Y_1]$ (if exists) by $8$ if the edges in $E[y,B_1]$ are colored
by $8$. Similar to Lemma~2, the following lemma holds.
\begin{lemma}
There exists a rainbow path between any two vertices of $Y_2$ under
the above edge-coloring.
\end{lemma}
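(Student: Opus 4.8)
The statement to prove is that any two vertices $x,y \in Y_2$ are joined by a rainbow path under the current edge-coloring, and the author explicitly signals that this should mirror the proof of Lemma~2 for $X_2$. My plan is therefore to transcribe the case analysis of Lemma~2 through the natural symmetry $u \leftrightarrow v$, $A_i \leftrightarrow B_i$, $X_i \leftrightarrow Y_i$, while substituting the color classes that actually appear on the $Y$-side of the partial coloring. Concretely, the colors $\{2,3\}$ on the edges at $u$ and $v$ play symmetric roles, but the quadruple of ``transit'' colors differs: on the $X$-side Observation~4 supplies a $\{3,4,5,6,7\}$-rainbow path from $A_1$ to $v$, whereas on the $Y$-side Observation~4(2) supplies a $\{2,4,5,6,7\}$-rainbow path from $B_1$ to $u$. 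So the first bookkeeping task is to identify, for each case of Lemma~2, the corresponding color set on the $Y$-side and verify that the newly colored edges $E[Y_1,Y_2]$ (colored with $8$ and $9$ complementarily to $E[Y_2,B_1]$) and $G[Y_2]$ (colored $4$ or $7$) fit into these paths without repetition.

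First I would dispose of Case~1, where $x$ and $y$ lie in different parts of the bipartition from Lemma~1. Here I pick neighbors $x',y' \in B_1$ of $x,y$ with the edges $xx',yy'$ carrying the two distinct colors $8,9$; then Observation~4(2) gives a $\{2,4,5,6,7\}$-rainbow path $P_{y',u}$, and concatenating $x,x',v,u$ with $P_{y',u}$ yields a rainbow $x$--$y$ path, since $\{8,9\}$ and the edge $uv$ (color $1$) are disjoint from the Observation~4 palette. Next, for Case~2 with $x$ and $y$ in the same part, I would split on whether $d(x,A_1)=2$ or $=3$, exactly paralleling the $d(\cdot,B_1)$ split in Lemma~2. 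When $d(x,A_1)=2$, a neighbor $x' \in B_1$ (via the set partition, the analogue of ``$x'\in A_1$'') gives a short $\{1,2,3,6,8\}$-type rainbow segment $x,x',z,u,v$ into $B_1$, and Lemma~1 applied to $G[Y_2]\cup E[Y_2,B_1]$ furnishes a complementary $\{7,9\}$-rainbow path from $y$ to $B_1$; these splice together. The $d(x,A_1)=d(y,A_1)=3$ subcase again branches on whether $x'$ (the second vertex of a shortest $x$--$B_1$ path) lies in $B_1$, in $Y_1$, or in $Y_2$, with the colors on the segment $x',z,w$ drawn from $\{4,5,6\}$ as on the $X$-side.

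The step I expect to be the main obstacle is the innermost branch $x' \in Y_2$ of the distance-$3$ subcase, precisely as it was the most delicate branch in Lemma~2. There one must track whether $xx'$ received color $4$ or color $7$ from the $G[Y_2]$ coloring, and—crucially—invoke the fact that $x'$ and $y$ lie in different parts of the Lemma~1 bipartition to guarantee that the edges $x'z$ and $yy'$ into $B_1$ receive \emph{different} colors from $\{8,9\}$. This parity argument is exactly where the complementary $8/9$ assignment on $E[Y_1,Y_2]$ and $E[Y_2,B_1]$ is used, and verifying that no color is repeated along the full path $x,x',z,w,u,v,y',y$ (a $\{1,2,3,6,7,8,9\}$-rainbow path) requires checking that the two forest-edge colors and the two terminal $8/9$ colors are genuinely distinct. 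Since every path constructed uses at most the nine available colors and the symmetry with Lemma~2 is exact, assembling these cases completes the proof; the only genuine care is in the color-disjointness verification of this last branch.
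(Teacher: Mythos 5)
Your proposal is correct and follows exactly the route the paper intends: the paper gives no separate proof of this lemma, stating only that it holds ``similar to Lemma~2,'' and your symmetric transcription ($u\leftrightarrow v$, $A_i\leftrightarrow B_i$, $X_i\leftrightarrow Y_i$, with Observation~4(2)'s $\{2,4,5,6,7\}$ palette replacing $\{3,4,5,6,7\}$ and the complementary $8/9$ assignment on $E[Y_2,B_1]$) is precisely that argument, including the delicate $x'\in Y_2$ branch.
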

\begin{lemma}
For any $x\in X_2$ and $y\in X_4$, there exists a rainbow path
between $x$ and $y$ under the above edge-coloring.
\end{lemma}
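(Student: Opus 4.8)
The plan is to build the $x$–$y$ rainbow path by crossing the central edge $uv$: enter on the left from $x$ through $A_1$ and $u$, cross $uv$, and return on the right from $v$ to $y$. First I would record the two access structures. For $y\in X_4$, Observation~1$(3)$ gives a neighbour $y'\in X_1$ with $c(yy')=8$, and Observation~3$(1)$ gives a $\{3,5,6,7\}$-rainbow path $Q$ from $y'$ to $v$; inspecting the proof of Observation~3, $Q$ uses color $7$ \emph{only} when $y'$ reaches $v$ through $I$ or $W$. For $x\in X_2$, the $c^{7}_{8,9}$-coloring of $G[X_2]\cup E[X_2,A_1]$ together with Lemma~1 gives $x$ either a direct color-$8$ edge to $A_1$, a direct color-$9$ edge to $A_1$, or a color-$7$ step inside $X_2$ followed by a color-$9$ edge to $A_1$.

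The candidate path is
\[
x \longrightarrow A_1 \xrightarrow{\,2\,} u \xrightarrow{\,1\,} v \longrightarrow (\text{reverse of }Q) \longrightarrow y' \xrightarrow{\,8\,} y,
\]
whose colors are (the entry of $x$), $2$, $1$, (the colors of $Q$), $8$. Since colors $1$ and $2$ occur nowhere else on this path, being rainbow reduces to avoiding a clash on the colors $7$ and $8$. If $x$ has a direct color-$9$ edge to $A_1$ (i.e.\ $x$ lies in the color-$9$ part of the bipartition of $G[X_2]$), the colors are $9,2,1,(\subseteq\{3,5,6,7\}),8$, which are pairwise distinct; here it is irrelevant whether $Q$ uses color $7$, so this case is immediate.

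The first difficulty arises when $x$'s only direct edge to $A_1$ is color $8$. Using it would repeat the color $8$ of $yy'$, so I would take the color-$7$ detour, giving colors $7,9,2,1,(\text{colors of }Q),8$; this is rainbow exactly when $Q$ avoids color $7$. Thus the crux is the pocket where $x$ sits in the color-$8$ part of $X_2$ and, simultaneously, every $X_1$-neighbour of $y$ reaches $v$ only through $I$ or $W$, so both sides insist on color $7$. To break this deadlock I would abandon $y$'s $X_1$-exit and use its $A_1$-exit instead: $y$ has a neighbour $a_y\in A_1$ with $c(ya_y)=5$, and by the definition of $B_1$ any $B$-neighbour of an $A_1$-vertex already lies in $B_1$. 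Hence whenever $a_y$ has a $B$-neighbour there is the short path $v\xrightarrow{3}B_1\xrightarrow{6}a_y\xrightarrow{5}y$, which combined with $x$'s direct color-$8$ entry yields a rainbow path with colors $8,2,1,3,6,5$ (collapsing to $8,5$ if $x$ and $y$ share an $A_1$-neighbour).

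The main obstacle, where I expect the bulk of the case analysis to concentrate, is the residual pocket in which $x$ lies in the color-$8$ part, every $X_1$-neighbour of $y$ needs color $7$ to reach $v$, and every $A_1$-neighbour of $y$ has all its neighbours in $X\cup Z$ (no $B_1$-neighbour, so the short $\{6,3\}$ detour is unavailable). Here I would first try a short-circuit through $E[A_1,X_1]$ (color $4$): if some $A_1$-neighbour of $x$ is adjacent to some $X_1$-neighbour $y'$ of $y$, then the color-$7$ detour produces the rainbow path with colors $7,9,4,8$, bypassing $Q$ and $v$ altogether. When even this fails, the task is to separate the two forced occurrences of color $7$; I would argue that the bridgelessness guarantees of Observation~1, which keep every such vertex supplied with an alternative neighbour into $A_1$ or into the right-hand side, always leave enough room to recolor or reroute one of the two color-$7$ steps. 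Pinning down this last case cleanly is the part I expect to require the most care.
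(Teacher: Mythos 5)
There is a genuine gap, and it is exactly where you say you expect trouble. The paper's proof is two short cases keyed to the color of the edge $xx'$ with $x'\in N(x)\cap A_1$. If $c(xx')=9$, it does what you do: $x,x',u,v$, then the $\{3,5,6,7\}$-rainbow path of Observation~3 back to $y$'s $X_1$-neighbour, then the color-$8$ edge into $y$. But if $c(xx')=8$, the paper does \emph{not} keep $y$'s return leg anchored on $X_1$; it switches to $y$'s neighbour $y'$ in $A_1$ (every vertex of $X\subseteq N(A)$ has one, and such an $A$-vertex lies in $A_1$ by definition) and invokes Observation~4, which supplies a $\{3,4,5,6,7\}$-rainbow path from $y'$ to $v$. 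Since $E[A_1,X_4]$ carries color $9$ (this is the reading the paper's own proof forces, even though the coloring table also lists this edge class under color $5$ -- an internal inconsistency you reasonably stumbled on), the concatenation $x,x',u,v,P_{v,y'},y',y$ has colors $8,2,1,\{3,4,5,6,7\},9$, pairwise distinct. No detour through the forest of $X_2$, no dependence on whether the return path uses color $7$.

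By insisting on exiting $y$ through $X_1$ in the color-$8$ case, you forced the color-$7$ step inside $X_2$ and manufactured the $7$-versus-$7$ deadlock; your repairs (the $B_1$ shortcut, which needs $a_y$ to have a $B_1$-neighbour; the $E[A_1,X_1]$ shortcut; the final appeal to bridgelessness) leave a residual case uncovered, and you acknowledge this. The missing idea is Observation~4 together with the color-$9$ edge from $A_1$ into $X_4$: that pair makes the color-$8$ case symmetric to the color-$9$ case and closes the lemma without any further analysis.
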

\begin{proof}
Let $x'$ be a neighbor of $x$ in $A_1$. If $xx'$ is colored by $8$.
By Observation~4, there exists a $\{3,4,5,6,7\}$-rainbow path
$P_{v,y'}$, where $y'$ is a neighbor of $y$ in $A_1$. Thus
$x,x',u,,v, P_{v,y'},y',y$ is a rainbow path between $x$ and $y$. If
$xx'$ is colored by $9$. By Observation~3, there exists a
$\{3,5,6,7\}$-rainbow path $P_{v,y'}$, where $y'$ is a neighbor of
$y$ in $X_1$. Thus $x,x',u,,v, P_{v,y'},y',y$ is a rainbow path
between $x$ and $y$.
\end{proof}

Similar to Lemma~4, the following lemma holds.
\begin{lemma}
For any $x\in Y_2$ and $y\in Y_4$, there exists a rainbow path
between $x$ and $y$ under the above edge-coloring.
\end{lemma}

Now, we are ready to prove Theorem~5.

\noindent{\itshape Proof of Theorem~5:} It suffices to show that the
edge-coloring of $G$ given in Subsection~$3.1$ is a rainbow
coloring, that is, for any pair of vertices $(x,y)\in (V(G),V(G))$
there exists a rainbow path between $x$ and $y$. Note that $A_1,B_1$
are nonempty, and the other sets may be empty. If some sets are
empty, then it is more easier. So we consider the most complicated
case, that is, all sets are nonempty. Consider the following three
cases.

{\itshape Case~1.} $(x,y)\in (\{u,v\},V(G))$.

It is easy to check that the conclusion holds from Figure~$1$ and
the definition of the vertex set partition.

{\itshape Case~2.} $(x,y)\in (A\cup B\cup X\cup Y\cup Z, V(G))$.

By Case~1, there exist rainbow paths between $A\cup B\cup X\cup
Y\cup Z$ and $\{y,u,\}$. Thus, we only consider $(x,y)\in (A\cup
X\cup B\cup Y\cup Z, V(G))$. Tables~$1$ and~$1'$ present a rainbow
path between $x$ and $y$ (We omit some rainbow paths, which are
denoted by ``$\cdots$", by symmetry).

In Table~$1$, $(x,y)\in (A_1,X_2)$ corresponds to ``$A_1,u,v,$~O2
and $A_1,X_2$''. By Figure~1 and the definition of the vertex set
partition, there exists a $\{1,2\}$-rainbow path between $x$ and
$v$. ``O2'' means that by Observation~2, there exists a
$\{3,4,5,6,7\}$-rainbow path between $v$ and $y'$, where $y'$ is a
neighbor of $y$ in $A_1$. Moreover, $y'y$ is colored by $8$ or $9$.
Thus $x$ and $y$ is connected by a rainbow path.

\begin{figure}[htbp]
{\tiny
\begin{center}
\renewcommand\arraystretch{1.5}
\begin{tabular}{|p{0.8cm}|p{1.27cm}|p{1.27cm}|p{1.27cm}|p{1.27cm}|p{1.27cm}|p{1.27cm}|p{1.27cm}|p{1.27cm}|}
\hline & $A_1$ & $A_2$ & $A_3$ & $X_1$ & $X_2$ & $X_3$ & $X_4$ & $Z$
\\\hline

$A_1$ & $A_1,u,v,$~and O4  & $A_1,u,$~and L1 & $A_1,u,A_3$ &
$A_1,u,v,$~and O3 & $A_1,u,v,$~O4 and $A_1,X_2$ & $A_1,u,v,$~O4
and~$A_1,X_3$ & $A_1,u,v,$~O4 and~$A_1,X_4$ & $A_1,u,v,B_1$, $Z$
\\\hline

$A_2$ & $\cdots$ & L1 & L1~and~$u,A_3$ & L1,~and~$u,A_1$, $X_1$ &
L1,~and~$u,A_1$, $X_2$ & L1,~and~$u,A_1$, $X_3$ & L1,~and~$u,A_1$,
$X_4$ & $A_2,u,v,B_1$, $Z$
\\\hline

$A_3$ & $\cdots$ & $\cdots$ & $A_3,A_1,u$, $A_3$ & $A_3,u,A_1$,
$X_1$& $A_3,u,A_1$, $X_2$ & $A_3,u,A_1$, $X_3$ & $A_3,u,A_1$, $X_4$
& $A_3,u,A_1$, $X_1,Z$
\\\hline

$X_1$ & $\cdots$ & $\cdots$ & $\cdots$ & $X_1,A_1,u$, $v$ and O3 &
$X_1,A_1,u$, $v$, O3 and L1 & $X_1,A_1,u$, $v$, O3 and $A_1,X_3$ &
$X_1,A_1,u$, $v$, O3 and $A_1,X_4$ & $X_1,A_1,u$, $v,B_1,Z$
\\\hline

$X_2$ & $\cdots$ & $\cdots$ & $\cdots$ & $\cdots$ &  L2 &
$X_2,A_1,u$, $v,B_1$,~O4 and~$A_1,X_3$ & L4 & $X_1,A_1,u$, $v,B_1,Z$
\\\hline

$X_3$ & $\cdots$ & $\cdots$ & $\cdots$ & $\cdots$ & $\cdots$ &
$X_3,A_1,u$, $v,B_1,$~O4 and~$A_1,X_3$ & $X_3,A_1,u$, $v,B_1,$~O4
and~$A_1,X_4$ & $X_3,A_1,u$, $v,B_1,Z$
\\\hline

$X_4$ & $\cdots$ & $\cdots$ & $\cdots$ & $\cdots$ & $\cdots$ &
$\cdots$ & $X_4,A_1,u$, $v,$~O3 and $X_1,X_4$ & $X_4,A_1,u$,
$v,B_1,Z$
\\\hline

$B_1$ & $B_1,v,u,A_1$ & $B_1,v,u,A_1$, $A_2$ & $B_1,v,u,A_1$, and L1
& $B_1,v,u,A_1$, $X_1$ & $B_1,v,u,A_1$, $X_2$ & $B_1,v,u,A_1$, $X_3$
& $B_1,v,u,A_1$, $X_4$ & $B_1,v,u,A_1$, $Z$
\\\hline

$B_2$ & L1,~and~$v,u$, $A_1$ & L1~and~$v,u,A_2$ & L1~and~$v$,
$u,A_3$ & L1,and~$v,u,A_1$, $X_1$ & L1,$v,u,A_1$, and L1 & L1,~and
$v,u$, $A_1,X_3$ & L1,~and $v,u$, $A_1,X_4$ & L1,~and $v,u$, $A_1,Z$
\\\hline

$B_3$ & $B_3,B_1,v,u$, $A_1$ & $B_3,v,u$, and L1 & $B_3,v,u,A_3$ &
$B_3,B_1,v,u$, $A_1,X_1$ & $B_3,B_1,v,u$, $A_1$~and~L1 &
$B_3,B_1,v,u$, $A_1,X_3$ & $B_3,B_1,v,u$, $A_1,X_4$ & $B_3,B_1,v,u$,
$A_1,$, $Z$
\\\hline

$Y_1$ & $Y_1,B_1,v,u$, $A_1$ & $Y_1,B_1,v,u$, and L1 &
$Y_1,B_1,v,u$, $A_1,A_3$ & $Y_1,B_1,v,u$, $A_1,X_1$ & $Y_1,B_1,v,u$,
$A_1,X_2$ & $Y_1,B_1,v,u$, $A_1,X_3$ & $Y_1,B_1,v,u$, $A_1,X_4$ &
$Y_1,B_1,v,u$, $A_1,Z$
\\\hline

$Y_2$ & $Y_2,B_1,v,u$, $A_1$ & $Y_2,B_1,v,u$, and L1 &
$Y_2,B_1,v,u$, $A_1,A_3$ & $Y_2,B_1,v,u$, $A_1,X_1$ & $Y_2,B_1,v,u$,
and L1 & $Y_2,B_1,v,u$, $A_1,X_3$ & L1,~and~$B_1,v$, $u,A_1,X_4$ &
$Y_2,B_1,v,u$, $A_1,Z$
\\\hline

$Y_3$ & $Y_3,B_1,v,u$, $A_1$ & $Y_3,B_1,v,u$, and L1 &
$Y_3,B_1,v,u$, $A_1,A_3$ & $Y_3,B_1,v,u$, $A_1,X_1$ & $Y_3,B_1,v,u$,
and L1 & $Y_3,B_1,v,u$, $A_1,X_3$ & $Y_3,B_1,v,u$, $A_1,X_4$ &
$Y_3,B_1,v,u$, $A_1,Z$
\\\hline

$Y_4$ & $Y_4,B_1,v,u$, $A_1$ & $Y_4,B_1,v,u$, and L1 &
$Y_4,B_1,v,u$, $A_1,A_3$ & $Y_4,B_1,v,u$, $A_1,X_1$ & $Y_4,B_1,v,u$,
and L1 & $Y_4,B_1,v,u$, $A_1,X_3$ & $Y_4,B_1,v,u$, $A_1,X_1,X_4$ &
$Y_4,B_1,v,u$, $A_1,Z$
\\\hline

$Z$ & $Z,B_1,v,u$, $A_1$ & $Z,B_1,v,u$, and L1 & $Z,B_1,v,u$,
$A_1,A_3$ & $Z,B_1,v,u$, $A_1,X_1$ & $Z,B_1,v,u$, $A_1,X_2$ &
$Z,B_1,v,u$, $A_1,X_3$ & $Z,B_1,v,u$, $A_1,X_4$ & $Z,B_1,v,u$,
$A_1,Z$
\\\hline

$I$ & $I,Z,B_1,v$, $u,A_1$ & $I,Z,B_1,v$, $u$ and L1 & $I,Z,B_1,v$,
$u,A_1,A_3$ & $I,Z,B_1,v$, $u,A_1,X_1$ & $I,Z,B_1,v$, $u,A_1,X_2$ &
$I,Z,B_1,v$, $u,A_1,X_3$ & $I,Z,B_1,v$, $u,A_1,X_4$ & $I,Z,B_1,v$,
$u,A_1,Z$
\\\hline

$J'=J\setminus J_0$ & $J',Z,B_1,v$, $u,A_1$ & $J',Z,B_1,v$, $u$, and
L1 & $J',Z,B_1,v$, $u,A_1,A_3$ & $J',Z,B_1,v$, $u,A_1,X_1$ &
$J',Z,B_1,v$, $u,A_1,X_2$ & $J',Z,B_1,v$, $u,A_1,X_3$ &
$J',Z,B_1,v$, $u,A_1,X_4$ & $J',Z,B_1,v$, $u,A_1,Z$
\\\hline

$J_0$ & $J_0,Z,B_1$, $v,u,A_1$ & $J_0,Z,B_1$, $v,u$, and L1&
$J_0,Z,B_1$, $v,u,A_1,A_3$ & $J_0,Z,B_1$, $v,u,A_1,X_1$ &
$J_0,Z,B_1$, $v,u,A_1,X_2$ & $J_0,Z,B_1$, $v,u,A_1,X_3$ &
$J_0,Z,B_1$, $v,u,A_1,X_1$, $X_4$ & $J_0,Z,B_1$, $v,u,A_1,Z$
\\\hline

$K$ & $K,Y_1,B_1$, $v,u,A_1$ & $K,Y_1,B_1$, $v,u$, and L1 &
$K,Y_1,B_1$, $v,u,A_1,A_3$ & $K,Y_1,B_1$, $v,u,A_1,X_1$ &
$K,Y_1,B_1$, $v,u,A_1,X_2$ & $K,Y_1,B_1$, $v,u,A_1,X_3$ &
$K,Y_1,B_1$, $v,u,A_1,X_4$ & $K,Y_1,B_1$, $v,u,A_1,Z$
\\\hline

$W$ & $W,Y_1,B_1$, $v,u,A_1$ & $W,Y_1,B_1$, $v,u$, and L1 &
$W,Y_1,B_1$, $v,u,A_1,A_3$ & $W,Y_1,B_1$, $v,u,A_1,X_1$ &
$W,Y_1,B_1$, $v,u,A_1,X_2$ & $W,Y_1,B_1$, $v,u,A_1,X_3$ &
$W,Y_1,B_1$, $v,u,A_1,X_4$ & $W,Y_1,B_1$, $v,u,A_1,Z$
\\\hline
\end{tabular}
\vspace*{40pt}

\centerline{\normalsize Table $1$. The rainbow paths in $G$ for Case
2.}
\end{center}}
\end{figure}

\begin{figure}[htbp]
{\tiny
\begin{center}
\renewcommand\arraystretch{1.4}
\begin{tabular}{|p{0.8cm}|p{1.50cm}|p{1.50cm}|p{1.50cm}|p{1.50cm}|p{1.50cm}|p{1.50cm}|p{1.50cm}|}
\hline & $B_1$ & $B_2$ & $B_3$ & $Y_1$ & $Y_2$ & $Y_3$ & $Y_4$
\\\hline

$B_1$ & $B_1,v,u,$~and O4  & $B_1,v,$~and L1 & $B_1,v,B_3$ &
$B_1,v,u,$~and O3 & $B_1,v,u,$~O4 and $B_1,Y_2$ & $B_1,v,u,$~O4
and~$B_1,Y_3$ & $B_1,v,u,$~O3 and~$Y_1,Y_4$
\\\hline

$B_2$ & $\cdots$ & L1 & L1~and~$v,B_3$ & L1,~and~$v,B_1,Y_1$ & L1,
$v,B_1$ and L1 & L1, $v,B_1,Y_3$ & L1, $v,B_1,Y_4$
\\\hline

$B_3$ & $\cdots$ & $\cdots$ & $B_3,B_1,v,B_3$ & $B_3,v,B_1,Y_1$&
$B_3,v,B_1,Y_2$ & $B_3,v,B_1,Y_3$ & $B_3,v,B_1,Y_4$

\\\hline

$Y_1$ & $\cdots$ & $\cdots$ & $\cdots$ &  O3~and~$u,v$, $B_1,Y_1$ &
O3,~and~$u,v$, $B_1,Y_2$ & O3,~and~$u,v$, $B_1,Y_3$ & O3,~and~$u,v$,
$B_1,Y_4$
\\\hline

$Y_2$ & $\cdots$ & $\cdots$ & $\cdots$ & $\cdots$ &  L3 &
$Y_2,B_1,v,u,$ O4~and~$B_1,Y_3$ & L5
\\\hline

$Y_3$ & $\cdots$ & $\cdots$ & $\cdots$ & $\cdots$ & $\cdots$ &
$Y_3,B_1,v,u,$ O4~and~$B_1,Y_3$ & $Y_4,Y_1$,~and, $v,B_1,Y_3$
\\\hline

$Y_4$ & $\cdots$ & $\cdots$ & $\cdots$ & $\cdots$ & $\cdots$ &
$\cdots$ & $Y_4,B_1,v,u,$ O3~and~$Y_1,Y_4$
\\\hline

$Z$ & $Z,A_1,u,v,B_1$ & $Z,A_1,u,v,$ and L1 & $Z,A_1,u,v,B_1$, $B_3$
& $Z,A_1,u,v,B_1$, $Y_1$ & $Z,A_1,u,v,B_1$, $Y_2$ & $Z,A_1,u,v,B_1$,
$Y_3$ & $Z,A_1,u,v,B_1$, $Y_4$
\\\hline

$I$ & $I,Z,A_1,u,v$, $B_1$ & $I,Z,A_1,u,v,$ and L1 & $I,Z,A_1,u,v$,
$B_1,B_3$ & $I,Z,A_1,u,v$, $B_1,Y_1$ & $I,Z,A_1,u,v$, $B_1,Y_2$ &
$I,Z,A_1,u,v$, $B_1,Y_3$ & $I,Z,A_1,u,v$, $B_1,Y_4$
\\\hline

$J'=J\setminus J_0$ & $J',Z,A_1,u,v$, $B_1$ & $J',Z,A_1,u,v,$ and L1
& $J',Z,A_1,u,v$, $B_1,B_3$ & $J',Z,A_1,u,v$, $B_1,Y_1$ &
$J',Z,A_1,u,v$, $B_1,Y_2$ & $J',Z,A_1,u,v$, $B_1,Y_3$ &
$J',Z,A_1,u,v$, $B_1,Y_4$
\\\hline

$J_0$ & $J_0,Z,A_1,u,v$, $B_1$ & $J_0,Z,A_1,u,v,$ and L1 &
$J_0,Z,A_1,u,v$, $B_1,B_3$ & $J_0,Z,A_1,u,v$, $B_1,Y_1$ &
$J_0,Z,A_1,u,v$, $B_1$ and L1 & $J_0,Z,A_1,u,v$, $B_1,Y_3$ &
$J_0,Z,A_1,u,v$, $B_1,Y_1,Y_4$
\\\hline

$K$ & $K,Z,A_1,u,v$, $B_1$ & $K,Z,A_1,u,v,$ and L1 & $K,Z,A_1,u,v$,
$B_1,B_3$ & $K,Z,A_1,u,v$, $B_1,Y_1$ & $K,Z,A_1,u,v$, $B_1,Y_2$ &
$K,Z,A_1,u,v$, $B_1,Y_3$ & $K,Z,A_1,u,v$, $B_1,Y_4$
\\\hline

$W$ & $W,Z,A_1,u,v$, $B_1$ & $W,Z,A_1,u,v,$ and L1 & $W,Z,A_1,u,v$,
$B_1,B_3$ & $W,Z,A_1,u,v$, $B_1,Y_1$ & $W,Z,A_1,u,v$, $B_1,Y_2$ &
$W,Z,A_1,u,v$, $B_1,Y_3$ & $W,Z,A_1,u,v$, $B_1,Y_4$
\\\hline
\end{tabular}

\vspace*{10pt} \centerline{\normalsize Table $1'$. The rainbow paths
in $G$ for Case 2.} \vspace*{10pt}
\begin{tabular}{|p{0.7cm}|p{1.27cm}|p{1.27cm}|p{1.27cm}|p{1.27cm}|p{1.27cm}|p{1.27cm}|p{1.27cm}|p{1.27cm}|}
\hline & $I$ & $K$ & $W$ & $J_1$ & $J_2$ & $J_3$ & $J_4$ & $J_0$
\\\hline

$I$ & $I,Z,B_1,v,u,$ $A_1,X_1,I$ & $I,Z,A_1,u,v$, $B_1,Y_1,K$ &
$I,X_1,A_1,u$, $v,B_1,Y_1,W$ & $I,X_1,A_1,u$, $v,B_1,Z,J_1$ &
$I,X_1,A_1,u$, $v,B_1,Z,J_2$ & $I,X_1,A_1,u$, $v,B_1,Z,J_3$ &
$I,X_1,A_1,u$, $v,B_1,Z,J_4$ & $I,X_1,A_1,u$, $v,B_1,Z,J_0$
\\\hline

$K$ & $\cdots$ & $K,Z,A_1,u$, $v,B_1,Y_1,K$ & $K,Z,A_1,u$,
$v,B_1,Y_1,W$ & $K,Z,A_1,u$, $v,B_1,Z,J_1$ & $K,Z,A_1,u$,
$v,B_1,Z,J_2$ & $K,Z,A_1,u$, $v,B_1,Z,J_3$ & $K,Z,A_1,u$,
$v,B_1,Z,J_4$ & $K,Z,A_1,u$, $v,B_1,Z,J_0$
\\\hline

$W$ & $\cdots$ & $\cdots$ & $W,X_1,A_1$, $u,v,B_1,Y_1$, $W$ &
$W,X_1,A_1$, $u,v,B_1,Z$, $J_1$ & $W,X_1,A_1$, $u,v,B_1,Z$, $J_2$ &
$W,X_1,A_1$, $u,v,B_1,Z$, $J_3$ & $W,X_1,A_1$, $u,v,B_1,Z$, $J_4$ &
$W,X_1,A_1$, $u,v,B_1,Z,J_0$
\\\hline

$J_1$ & $\cdots$ & $\cdots$ & $\cdots$ & $J_1,Z,A_1,u$,
$v,B_1,Z,J_1$ & $J_1,K,Z,B_1$, $v,u,A_1,Z$, $J_2$ & $J_1,K,Z,B_1$,
$v,u,A_1,Z$, $J_3$ & $J_1,K,Z,B_1$, $v,u,A_1,Z$, $J_4$ &
$J_1,K,Z,B_1$, $v,u,A_1,Z$, $J_0$
\\\hline

$J_2$ & $\cdots$ & $\cdots$ & $\cdots$ & $\cdots$ &  $J_2,I,X_1$,
$A_1,u,v,B_1$, $Z,J_2$ & $J_2,I,X_1$, $A_1,u,v,B_1$, $Z,J_3$ &
$J_2,I,X_1$, $A_1,u,v,B_1$, $Z,J_4$ & $J_2,I,X_1$, $A_1,u,v,B_1$,
$Z,J_0$
\\\hline

$J_3$ & $\cdots$ & $\cdots$ & $\cdots$ & $\cdots$ & $\cdots$ &
$J_3,W,X_1$, $A_1,u,v,B_1$, $Z,J_3$ & $J_3,W,X_1$, $A_1,u,v,B_1$,
$Z,J_4$ & $J_3,W,X_1$, $A_1,u,v,B_1$, $Z,J_0$
\\\hline

$J_4$ & $\cdots$ & $\cdots$ & $\cdots$ & $\cdots$ & $\cdots$ &
$\cdots$ & $J_0,Z,A_1$, $u,v,B_1,Z$, $J_0$ & $J_0,Z,A_1$,
$u,v,B_1,Z$, and L1
\\\hline

$J_0$ & $\cdots$ & $\cdots$ & $\cdots$ & $\cdots$ & $\cdots$ &
$\cdots$ & $\cdots$ & $J_0,Z,A_1$, $u,v,B_1,Z$, and L1
\\\hline
\end{tabular}
\vspace*{15pt}

\centerline{\normalsize Table $2$. The rainbow paths in $G$ for Case
3.}
\end{center}}
\end{figure}

{\itshape Case~3.} $(x,y)\in (I\cup J\cup K\cup W, V(G))$.

By Cases~1 and~2, there exists a rainbow path between every pair of
vertices in $(I\cup J\cup K\cup W, A\cup B\cup X\cup Y\cup
Z\cup\{u,v\})$. Thus, suppose $(x,y)\in (I\cup J\cup K\cup W, I\cup
J\cup K\cup W)$. Table~$2$ presents a rainbow path between $x$ and
$y$. \hfill$\sqcap\hskip-0.7em\sqcup$

We must say that at the moment we have not found examples showing
that the upper bound 9 is best possible. However, by a similar
method in \cite{DL} we can give the following example of graphs with
diameter 3 for which the rainbow connection number reaches 7.

\noindent{\bf Example~2.} Let $K_n$ be a complete graph with vertex
set $\{v_1,v_2,\ldots,v_n\}$, where $n\geq 217$. For every $v_i$, we
hang a path $P_i=(v_i,v_{i,1},v_{i,2},v_{i,3})$, and then we
identify the vertex $v_{i,3}$ with a vertex $v$. The resulting graph
is denoted by $G$. Clearly, $diam(G)=3$. Let $c$ be any
$6$-edge-coloring of $G$ with colors $\{1,2,\ldots,6\}$. Since
$6^3=216$ and there exist $217$ hanging paths $P_i$, at least two of
them have the same color. Without loss generality, say $P_1$ and
$P_2$, that is,
$c(v_1v_{1,1})=c(v_2v_{2,1}),c(v_{1,1}v_{1,2})=c(v_{2,1}v_{2,2})$
and $c(v_{1,2}v_{1,3})=c(v_{2,2}v_{2,3})$. By the structure of $G$,
it is easy to see that there exists no rainbow path between
$v_{1,1}$ and $v_{2,1}$ in $G$ under $c$. Thus $rc(G)\geq 7.$

\end{document}